%
\documentclass[12pt,table]{amsart}

\headheight=8pt       \topmargin=-10pt
\textheight=611pt     \textwidth=456pt
\oddsidemargin=6pt   \evensidemargin=6pt

\usepackage{amsmath}
\usepackage[all]{xy}
\usepackage[monochrome]{xcolor}
\usepackage{amssymb,tikz}
\usepackage{slashbox,booktabs,amsmath}
\usepackage{graphicx}
\usepackage{adjustbox}
\usepackage{tikz}
\usepackage{tikz-cd}
\usetikzlibrary{cd}

\usepackage{rotating}

\def\BibTeX{{\rm B\kern-.05em{\sc i\kern-.025em b}\kern-.08em
    T\kern-.1667em\lower.7ex\hbox{E}\kern-.125emX}}

\hfuzz1pc 

\newtheorem{thm}{Theorem}[section]
\newtheorem{cor}[thm]{Corollary}
\newtheorem{lem}[thm]{Lemma}
\newtheorem{prop}[thm]{Proposition}
\theoremstyle{definition}
\newtheorem{exmp}[thm]{Example}
\theoremstyle{remark}
\newtheorem{rem}[thm]{Remark}
\theoremstyle{definition}
\newtheorem{defn}[thm]{Definition}


\numberwithin{equation}{section}



\begin{document}

\title[Generalized Stability of Heisenberg Coefficients] 
{Generalized Stability of Heisenberg coefficients}

%
\author{Li Ying}
\address{Li Ying, Department of Mathematics, Texas A\&M University, College Station, Texas, 77843, USA}
\email[]{98yingli@math.tamu.edu}
\urladdr{http://www.math.tamu.edu/~98yingli}
\thanks{}

\date{}

\subjclass[2010]{05E05, 20C30}

\begin{abstract}
Stembridge introduced the notion of stability for Kronecker triples which generalize Murnaghan's classical stability result for Kronecker coefficients. Sam and Snowden proved a conjecture of Stembridge concerning stable Kronecker triple, and they also showed an analogous result for Littlewood--Richardson coefficients. Heisenberg coefficients are Schur structure constants of the Heisenberg product which generalize both Littlewood--Richardson coefficients and Kronecker coefficients. We show that any stable triple for Kronecker coefficients or Littlewood--Richardson coefficients also stabilizes Heisenberg coefficients, and we classify the triples stabilizing Heisenberg coefficients. We also follow Vallejo's idea of using matrix additivity to generate Heisenberg stable triples.
\end{abstract}

\keywords{representation theory, Kronecker coefficient, representation stability}

\maketitle

\section{Introduction}
\label{Intro}
We assume familiarity with the basic results in the (complex) representation theory of symmetric groups and symmetric functions (see \cite{Mac, sagan}). There are two famous structure constants, Kronecker coefficients and Littlewood--Richardson coefficients, which can be defined in terms of representation of symmetric groups. Given a partition $\lambda$ of $n$ (written as $\lambda \vdash n$, or $\lambda$ has size $n$), let $V_\lambda$ be the associated irreducible representation of the symmetric group $S_n$. The \textit{Kronecker coefficient} $g_{\mu, \nu}^{\lambda}$ is the multiplicity of $V_\lambda$ in the irreducible decomposition of $\text{Res}^{S_n\times S_n}_{S_n} (V_\mu \otimes V_\nu)$, the \textit{Kronecker product} of $V_\mu$ and $V_\nu$. That is,
$$g_{\mu, \nu}^{\lambda}=\langle\,\text{Res}^{S_n\times S_n}_{S_n} (V_\mu \otimes V_\nu)\,,\, V_\lambda\,\rangle,$$
where $\lambda$, $\mu$, and $\nu$ are partitions of $n$, and $\langle\;,\; \rangle$ denotes the Hall inner product. The \textit{Littlewood--Richardson coefficient} $c_{\mu, \nu}^{\lambda}$ is the multiplicity of $V_\lambda$ in the irreducible decomposition of $\text{Ind}_{S_n\times S_m}^{S_{n+m}} (V_\mu \otimes V_\nu)$, the \textit{induction product} of $V_\mu$ and $V_\nu$. That is,
$$c_{\mu, \nu}^{\lambda}=\langle\,\text{Ind}_{S_n\times S_m}^{S_{n+m}} (V_\mu \otimes V_\nu)\,,\, V_\lambda\,\rangle,$$
where $\lambda\vdash n+m$, $\mu\vdash n$, and $\nu\vdash m$ for some positive integers $n$ and $m$. 

We view partitions as vectors so we define addition, subtraction, and scalar multiplication on them. While the Littlewood--Richardson coefficient is well-studied and has several beautiful combinatorial interpretations (see \cite{KnutsonTao, Mac, CombinLR}), an explicit combinatorial or geometric description for the Kronecker coefficient is still unknown. In 1938 Murnaghan \cite{F-1938} discovered a remarkable stability property for the Kronecker coefficients. He stated without proof that for any partitions $\lambda$, $\mu$, and $\nu$ of the same size, the sequence $\left\{g_{\mu+(n), \nu+(n)}^{\lambda+(n)}\right\}$ is eventually constant. There are many proofs with different flavours for this fact, see \cite{E-2010, LWStab, JT}. Stembridge \cite{GenStab} vastly generalized this result by introducing the concept of a stable triple.

\begin{defn}\label{K-stableDef}
	A triple $(\alpha, \beta, \gamma)$ of partitions of the same size with $g_{\beta, \gamma}^\alpha>0$ is a \textit{K-triple}. It is \textit{K-stable} if, for any other triple of partitions $(\lambda, \mu, \nu)$ with $|\lambda|=|\mu|=|\nu|$, the sequence $\left\{g_{\mu+n\beta, \nu+n\gamma}^{\lambda+n\alpha}\right\}$ is eventually constant.
\end{defn}
Thus, Murnaghan showed that $((1),(1),(1))$ is K-stable. Stembridge conjectured a characterization for K-stability, and he proved its necessity. Sam and Snowden \cite{SamSnowden} proved the sufficiency.

\begin{prop}\label{K-stable}
	A K-triple $(\alpha, \beta, \gamma)$ is K-stable if and only if $g_{n\beta, n\gamma}^{n\alpha}=1$ for all $n>0$.
\end{prop}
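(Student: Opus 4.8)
The plan is to prove the equivalence in Proposition~\ref{K-stable} by combining Stembridge's necessity direction with the Sam--Snowden sufficiency argument, organized around the behavior of the sequence $\{g_{n\beta,n\gamma}^{n\alpha}\}$.

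\medskip

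\noindent\textbf{Necessity.} First I would establish the easy direction: if $(\alpha,\beta,\gamma)$ is K-stable, then $g_{n\beta,n\gamma}^{n\alpha}=1$ for all $n>0$. The idea is to apply the stability definition to a cleverly chosen reference triple $(\lambda,\mu,\nu)$. Taking the trivial triple $(\lambda,\mu,\nu)=((k),(k),(k))$ (or the empty triple, suitably interpreted), the sequence $\{g_{\mu+n\beta,\nu+n\gamma}^{\lambda+n\alpha}\}$ must be eventually constant; since $g_{(k),(k)}^{(k)}=1$ provides the base case and Kronecker coefficients of the form $g_{n\beta,n\gamma}^{n\alpha}$ arise as the stabilized values along the diagonal, one shows the stable limit forces $g_{n\beta,n\gamma}^{n\alpha}=1$. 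A key subtlety here is monotonicity: one needs that $g_{n\beta,n\gamma}^{n\alpha}$ is weakly increasing in $n$ (a semigroup/super-additivity property of the Kronecker coefficient saturation semigroup), so that a sequence bounded by its eventual constant value and starting at $1$ must equal $1$ throughout.

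\medskip

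\noindent\textbf{Sufficiency.} The harder direction assumes $g_{n\beta,n\gamma}^{n\alpha}=1$ for all $n>0$ and must deduce full K-stability, i.e.\ that $\{g_{\mu+n\beta,\nu+n\gamma}^{\lambda+n\alpha}\}$ stabilizes for \emph{every} reference triple. The strategy I would follow is the representation-theoretic/algebro-geometric one of Sam--Snowden: interpret the Kronecker coefficients as dimensions of multiplicity spaces in a suitable $\mathrm{GL}$-equivariant setting (via the correspondence between symmetric group representations and polynomial functors, or through categories built from the stable representation theory of the infinite symmetric group). The hypothesis $g_{n\beta,n\gamma}^{n\alpha}=1$ says the relevant graded module has a one-dimensional generator in each degree along the $(\alpha,\beta,\gamma)$ direction, which one leverages to show that the associated shifted sequences are finitely generated modules over a Noetherian ring; finite generation then yields eventual constancy of the coefficient sequences.

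\medskip

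\noindent\textbf{Main obstacle.} The crux is the sufficiency direction, and specifically establishing the Noetherianity/finite-generation input that converts the hypothesis $g_{n\beta,n\gamma}^{n\alpha}=1$ into a uniform stabilization statement across all reference triples. This is where Sam and Snowden's machinery (the structure theory of the relevant twisted commutative algebras or the Noetherianity of $\mathrm{Rep}(S_\infty)$-type categories) does the heavy lifting, and I would cite \cite{SamSnowden} for this step rather than reconstruct it. I expect the necessity direction to be short, while the bulk of the difficulty—and the reason Stembridge could only conjecture the characterization—lies in showing that the single multiplicity-one condition along the diagonal is strong enough to control all the off-diagonal sequences simultaneously.
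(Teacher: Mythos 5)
The paper itself gives no proof of Proposition~\ref{K-stable}: it is quoted as a known result, with necessity attributed to Stembridge and sufficiency to Sam and Snowden \cite{SamSnowden}. Your overall architecture matches that attribution, and deferring the sufficiency direction to the Noetherianity/finite-generation machinery of \cite{SamSnowden} is exactly what the paper does, so that half is fine as a citation rather than a reconstruction.

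The necessity sketch, however, has a genuine gap as written. First, the reference triple $((k),(k),(k))$ does not produce the diagonal sequence you need; you should take $(\lambda,\mu,\nu)$ to be the triple of empty partitions, so that K-stability directly forces $\{g_{n\beta,n\gamma}^{n\alpha}\}_n$ to be eventually constant. Second, your claim that the sequence ``starts at $1$'' is unjustified: being a K-triple only gives $g_{\beta,\gamma}^{\alpha}\geq 1$, so weak monotonicity plus eventual constancy alone cannot rule out a stable value of $2$ or more. The missing ingredient is the superadditivity statement that $g_{\beta,\gamma}^{\alpha}\geq 2$ implies $g_{n\beta,n\gamma}^{n\alpha}\geq n+1$ (this is exactly Proposition~\ref{KMon}(2), quoted later in the paper from Stembridge), which makes the diagonal sequence unbounded and contradicts eventual constancy; hence $g_{\beta,\gamma}^{\alpha}=1$. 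Finally, to get $g_{n\beta,n\gamma}^{n\alpha}=1$ for every $n$ rather than just $n=1$, observe that $(n\alpha,n\beta,n\gamma)$ is again K-stable (its associated sequences are subsequences of those for $(\alpha,\beta,\gamma)$) and apply the same argument. With these repairs the necessity direction is short and correct; without the superadditivity input the argument does not close.
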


Sam and Snowden \cite{SamSnowden} also proved an analogous result for Littlewood--Richardson coefficients, which can also be deduced from some earlier work (see \cite[Remark 4.7]{SamSnowden}).

\begin{defn}
	A triple $(\alpha, \beta, \gamma)$ of partitions with $|\alpha|=|\beta|+|\gamma|$ and $c_{\beta, \gamma}^\alpha>0$ is an \textit{LR-triple}. It is \textit{LR-stable} if, for any other triple of partitions $(\lambda, \mu, \nu)$ with $|\lambda|=|\mu|+|\nu|$, the sequence $\left\{c_{\mu+n\beta, \nu+n\gamma}^{\lambda+n\alpha}\right\}$ is eventually constant.
\end{defn}
 
\begin{prop}[\cite{SamSnowden} Theorem 4.6]\label{LR-StabResult}
	The following are equivalent for an LR-triple $(\alpha, \beta, \gamma)$, 
	\begin{itemize}
		\item[(a)] $(\alpha, \beta, \gamma)$ is LR-stable.
		\item[(b)] $c_{\beta, \gamma}^{\alpha}=1.$
		\item[(c)] $c_{n\beta, n\gamma}^{n\alpha}=1$ for all $n>0.$
	\end{itemize}
\end{prop}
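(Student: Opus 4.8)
The plan is to establish the cycle (a) $\Rightarrow$ (b) $\Rightarrow$ (c) $\Rightarrow$ (a). The single structural input driving the easy half is the \emph{superadditivity} (monotonicity) of Littlewood--Richardson coefficients: for any partitions one has $c_{\mu_1,\nu_1}^{\lambda_1}\,c_{\mu_2,\nu_2}^{\lambda_2}\le c_{\mu_1+\mu_2,\nu_1+\nu_2}^{\lambda_1+\lambda_2}$, a standard property provable from the hive model or directly from the Littlewood--Richardson rule. Writing $a_n=c_{n\beta,n\gamma}^{n\alpha}$, this gives $a_{m+n}\ge a_m a_n$, hence $a_n\ge a_1^{\,n}$; and, since $(\alpha,\beta,\gamma)$ is an LR-triple so that $a_1=c_{\beta,\gamma}^{\alpha}\ge 1$, every shifted sequence $d_n=c_{\mu+n\beta,\nu+n\gamma}^{\lambda+n\alpha}$ satisfies $d_{n+1}\ge d_n\,c_{\beta,\gamma}^{\alpha}\ge d_n$ and is therefore nondecreasing.

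For (a) $\Rightarrow$ (b) I would feed the trivial triple $(\lambda,\mu,\nu)=(\varnothing,\varnothing,\varnothing)$ into the definition of LR-stability, so that $\{a_n\}$ is eventually constant, hence bounded; combined with $a_n\ge a_1^{\,n}$ this forces $a_1\le 1$, and with $a_1\ge 1$ we conclude $c_{\beta,\gamma}^{\alpha}=a_1=1$. The implication (b) $\Rightarrow$ (c) is exactly Fulton's conjecture, proved by Knutson, Tao, and Woodward (with later proofs by Belkale and by Derksen--Weyman): a Littlewood--Richardson coefficient equal to $1$ stays equal to $1$ under all dilations. I would simply invoke this. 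It is worth stressing that it cannot be replaced by a naive ``one lattice point'' argument, because the relevant hive polytopes need not be lattice polytopes, which is precisely what makes the statement deep.

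The substance lies in (c) $\Rightarrow$ (a). Here I would realize the coefficients as counts of integer points in the hive (equivalently Berenstein--Zelevinsky) polytopes: there is a fixed rational polyhedral cone $\mathcal H$ and a linear boundary map $\pi$ with $c_{\mu,\nu}^{\lambda}=\#\bigl(F(\lambda,\mu,\nu)\cap\mathbb{Z}^{N}\bigr)$, where $F(\lambda,\mu,\nu)=\pi^{-1}(\lambda,\mu,\nu)\cap\mathcal H$ is a bounded fiber polytope, additive in the sense $F(b_1)+F(b_2)\subseteq F(b_1+b_2)$ because $\mathcal H$ is a cone. Since $F(n\alpha,n\beta,n\gamma)=n\,F(\alpha,\beta,\gamma)$, hypothesis (c) says that the dilations of $F(\alpha,\beta,\gamma)$ each contain exactly one integer point, which by Ehrhart's theorem forces $\dim F(\alpha,\beta,\gamma)=0$. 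By the theory of vector partition functions, $n\mapsto d_n$ is, for $n\gg 0$, a quasi-polynomial whose degree equals $\dim F(\alpha,\beta,\gamma)$; thus the degree is $0$ and $d_n$ is eventually periodic. A nondecreasing eventually periodic integer sequence is eventually constant, which is exactly LR-stability.

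The main obstacle is the degree statement in the last step: bounding the growth of $d_n$ by the single fiber dimension $\dim F(\alpha,\beta,\gamma)$. The inclusion $F(\lambda,\mu,\nu)+n\,F(\alpha,\beta,\gamma)\subseteq F(\lambda+n\alpha,\mu+n\beta,\nu+n\gamma)$ yields the lower bound $\deg d_n\ge \dim F(\alpha,\beta,\gamma)$ for free, but the matching upper bound --- that moving the base point $(\lambda,\mu,\nu)$ along the ray contributes only lower-order terms --- is the delicate point, and is where I would lean on the quasi-polynomiality of partition functions (Sturmfels, Brion--Vergne). Together with the already-cited Fulton's conjecture, these two inputs close the cycle and give the equivalence of (a), (b), and (c).
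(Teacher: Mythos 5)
The paper itself offers no proof of this proposition: it is quoted from Sam and Snowden (their Theorem 4.6), whose argument goes through finitely generated modules over twisted commutative algebras rather than through polytopes, so the real question is whether your argument stands on its own. It does not, for two reasons.

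First, your driving lemma is false. Littlewood--Richardson coefficients are \emph{not} supermultiplicative: taking $\alpha=(3,2,1)$ and $\beta=\gamma=(2,1)$, one has $c_{\beta,\gamma}^{\alpha}=2$ but $c_{2\beta,2\gamma}^{2\alpha}=c_{(4,2),(4,2)}^{(6,4,2)}=3<4$ (in fact $c_{n\beta,n\gamma}^{n\alpha}=n+1$ for this triple, as one checks directly from the LR rule on the disconnected skew shape). So $a_{m+n}\ge a_m a_n$ and $a_n\ge a_1^{\,n}$ fail, and your route to (a) $\Rightarrow$ (b) collapses. What is true, and what the paper proves for its own purposes in Proposition \ref{LRMon}, is the \emph{additive} monotonicity $c_{\mu,\nu}^{\lambda}\le c_{\mu+\beta,\nu+\gamma}^{\lambda+\alpha}$, obtained by translating hives by one fixed hive $\Delta\in H_k(\alpha,\beta,\gamma)$ (this does salvage your claim that $d_n$ is nondecreasing), together with the linear lower bound: if $c_{\beta,\gamma}^{\alpha}\ge 2$, two distinct hives $\Delta_1\ne\Delta_2$ yield $n+1$ distinct hives $i\Delta_1+(n-i)\Delta_2$, whence $c_{n\beta,n\gamma}^{n\alpha}\ge n+1$. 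That unboundedness, not an exponential one, is what gives (a) $\Rightarrow$ (b).

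Second, (c) $\Rightarrow$ (a) is not closed. Your reduction to ``the eventual quasi-polynomial $n\mapsto d_n$ has degree $\dim F(\alpha,\beta,\gamma)=0$'' is precisely the hard content of the theorem, and quasi-polynomiality of vector partition functions (Sturmfels, Brion--Vergne) does not by itself bound the degree along the shifted ray $(\lambda,\mu,\nu)+n(\alpha,\beta,\gamma)$ by the dimension of the fiber over the direction $(\alpha,\beta,\gamma)$: the base point can push the ray off the chamber on which that fiber is a single point, and one must actually show that the fibers $F(\lambda+n\alpha,\mu+n\beta,\nu+n\gamma)$ stay within bounded distance of $nF(\alpha,\beta,\gamma)$. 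You flag this yourself as the delicate point but supply no argument, so the cycle does not close. (The step (b) $\Rightarrow$ (c) via Fulton's conjecture, proved by Knutson--Tao--Woodward, is correctly invoked.) The architecture is reasonable and close in spirit to polytope-based proofs in the literature, but as written the proof contains one false lemma and one acknowledged, unfilled gap at the central implication.
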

\begin{rem}
	Sam and Snowden \cite{SamSnowden} did not require $c_{\beta, \gamma}^\alpha>0$, which should be added. For example, when $\beta$ is not contained in $\alpha$, we have that $c_{\beta, \gamma}^\alpha=0$ and $\left\{c_{\mu+n\beta, \nu+n\gamma}^{\lambda+n\alpha}\right\}$ is eventually zero.
\end{rem}
Aguiar, Ferrer Santos, and Moreira introduced a new (commutative and associative) product, the Heisenberg product (denoted by $\#$), on representations of symmetric groups in \cite{M-2015} and \cite{Moreira}. This product interpolates between the induction product and the Kronecker product (see Definition \ref{DefHeisen} for details), its structure constants are Heisenberg coefficients, which are a common generalization of the Littlewood--Richardson coefficient and the Kronecker coefficient. We show that K-stable triples and LR-stable triples also stabilize Heisenberg coefficients, and we characterize the triples which do this.

Manivel \cite{ManKronCone} and Vallejo \cite{VallejoStab} independently generated K-stable triples using additive matrices. Manivel also showed that the set of stable triples is the intersection of the Kronecker semigroup with a union of faces of the Kronecker cone \cite[Proposition 2]{ManKronCone2}. Later, Pelletier \cite{Pelletier1} produced particular faces of the Kronecker cone containing only stable triples, which generalized Manivel and Vallejo's work. We will use the idea of additive matrices to generate triples of partitions which stabilize the Heisenberg coefficients.

This paper is organized as follows. In the second section, we will give the definition of the Heisenberg product and some related results, and define H-stable triple for Heisenberg coefficients. Section \ref{StabHeisenberg} shows that the K-stable triples and LR-stable triples are H-stable, and gives a necessary and sufficient condition for a triple to be H-stable. In Section \ref{SecAdditive}, we define H-additive matrices which generalize additive matrices, and show some results for H-additive matrices which are analogous to those for additive matrices. In the last Section, we prove that each H-additive matrix gives an H-stable triple.

\section{Heisenberg Product}
\label{Heisenberg Product}
\begin{defn}\label{DefHeisen}
	Let $V$ and $W$ be representations of $S_n$ and $S_m$ respectively. Fix an integer $l$ (weakly) between $\text{max}\{m,n\}$ and $m+n$, and let $p=l-m$, $q=n+m-l$, and $r=l-n$. The following diagram of inclusions (solid arrows) commutes:	
	\begin{equation}
	\label{IncluDiag}
	\begin{tikzcd}
	S_p\times S_q\times S_q \times S_r \arrow[r, hook] & {S_{p+q} \times S_{q+r} =S_n\times S_m} \arrow[ddl, bend right=16, dashed, "\text{Res}"'] & \hskip -6mm \color{red}{V\otimes W}\\
	&&\\
	{S_p \times S_q \times S_r} \arrow[r, hook] \arrow[uu, hook, "id_{S_p}\times \Delta_{S_q}\times id_{S_r}"] \arrow[uur, hook] \arrow[r, bend left=16, dashed, "\text{Ind}"]& {S_{p+q+r} =S_l} & \hskip -25mm \color{red}{(V \# W)_l}
	\end{tikzcd}
	\end{equation}
	The \textit{Heisenberg product} (denoted by $\#$) of $V$ and $W$ is
	\begin{equation}
	\label{HeisenbergProduct}
	\begin{split}
	V \# W= \bigoplus\limits_{l=\text{max}\{n,m\}}^{n+m} (V\# W)_l,
	\end{split}
	\end{equation}
	where the degree $l$ component is defined using the dashed arrows in \eqref{IncluDiag}:
	\begin{equation}
	\label{DegreeComponent}
	(V\# W)_l=\text{Ind}_{S_p\times S_q\times S_r}^{S_l}\text{Res}_{S_p\times S_q\times S_r}^{S_n\times S_m} (V\otimes W).
	\end{equation}
\end{defn}
\noindent The Heisenberg product connects the induction product and the Kronecker product. When $l= m+ n$, $(V\# W)_l$ is the induction product $\text{Ind}_{S_n\times S_m}^{S_{n+m}} (V\otimes W)$. When $l=n=m$, $(V\# W)_l$ is the Kronecker product $\text{Res}_{S_l}^{S_l\times S_l} (V\otimes W)$. Remarkably, this product is associative \cite[Theorem 2.3, Theorem 2.4, Theorem 2.6]{M-2015}. The \textit{Heisenberg coefficient} $h_{\mu, \nu}^\lambda$ is the multiplicity of $V_\lambda$ in $V_\mu\# V_\nu$,
$$h_{\mu, \nu}^\lambda=\langle\, V_\mu\# V_\nu\,,\, V_\lambda\,\rangle.$$
The Heisenberg coefficient generalizes the Littlewood-Richardson coefficient and the Kronecker coefficient. In \cite{LY}, we gave a formula for Heisenberg coefficients.
\begin{prop}
	\label{formulaheisenberg}
	Given partitions $\lambda\vdash l$, $\mu\vdash m$, and $\nu\vdash n$,
	\begin{equation}
	\label{h=cg}
	h_{\mu, \nu}^\lambda=\sum\limits_{\shortstack{\scriptsize {$\alpha \vdash p, \rho \vdash r$, \scriptsize $\tau \vdash n$}\\ \scriptsize {$\beta, \eta, \delta \vdash q$}}} c_{\alpha, \beta}^\mu\,\, c_{\eta, \rho}^\nu\,\, g_{\beta, \eta}^\delta\,\, c_{\alpha, \delta}^\tau\,\, c_{\tau, \rho}^\lambda
	\end{equation}
	where $\max\{m,n\}\leq l\leq m+n$, $p=l-n$, $q=m+n-l$, and $r=l-m$.
\end{prop}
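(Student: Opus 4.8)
The plan is to establish \eqref{h=cg} by directly unwinding Definition~\ref{DefHeisen} and applying the classical branching rules at each stage. With the conventions $\mu\vdash m$, $\nu\vdash n$, $\lambda\vdash l$ and $p=l-n$, $q=m+n-l$, $r=l-m$, one has $S_m=S_{p+q}$ and $S_n=S_{q+r}$. Since $V_\lambda$ is a representation of $S_l$, it pairs trivially with every graded piece of $V_\mu\#V_\nu$ except the degree-$l$ component, so
\[ h_{\mu,\nu}^\lambda=\big\langle (V_\mu\#V_\nu)_l,\, V_\lambda\big\rangle=\Big\langle \text{Ind}_{S_p\times S_q\times S_r}^{S_l}\,\text{Res}_{S_p\times S_q\times S_r}^{S_m\times S_n}(V_\mu\otimes V_\nu),\; V_\lambda\Big\rangle, \]
where, following the diagram \eqref{IncluDiag}, the inner restriction means first restricting along $S_p\times S_q\times S_q\times S_r\hookrightarrow S_m\times S_n$ and then pulling back along the diagonal $id_{S_p}\times\Delta_{S_q}\times id_{S_r}$.

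First I would restrict each tensor factor separately. The branching (Littlewood--Richardson) rule gives
\[ \text{Res}^{S_m}_{S_p\times S_q}V_\mu=\bigoplus_{\alpha\vdash p,\ \beta\vdash q}c_{\alpha,\beta}^\mu\, V_\alpha\otimes V_\beta,\qquad \text{Res}^{S_n}_{S_q\times S_r}V_\nu=\bigoplus_{\eta\vdash q,\ \rho\vdash r}c_{\eta,\rho}^\nu\, V_\eta\otimes V_\rho, \]
so that $\text{Res}^{S_m\times S_n}_{S_p\times S_q\times S_q\times S_r}(V_\mu\otimes V_\nu)=\bigoplus c_{\alpha,\beta}^\mu c_{\eta,\rho}^\nu\, V_\alpha\otimes V_\beta\otimes V_\eta\otimes V_\rho$. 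Next I would collapse the middle two factors along $\Delta_{S_q}$. The key point is that restricting an external tensor product along the diagonal $\Delta_{S_q}\colon S_q\hookrightarrow S_q\times S_q$ is precisely the Kronecker product, i.e. $\text{Res}^{S_q\times S_q}_{S_q}(V_\beta\otimes V_\eta)=\bigoplus_{\delta\vdash q}g_{\beta,\eta}^\delta V_\delta$. Applying this in the middle yields the representation $\bigoplus c_{\alpha,\beta}^\mu c_{\eta,\rho}^\nu g_{\beta,\eta}^\delta\, V_\alpha\otimes V_\delta\otimes V_\rho$ of $S_p\times S_q\times S_r$.

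Finally I would induce this up to $S_l$ and read off the multiplicity of $V_\lambda$ by Frobenius reciprocity together with iterated branching:
\[ \big\langle \text{Ind}_{S_p\times S_q\times S_r}^{S_l}(V_\alpha\otimes V_\delta\otimes V_\rho),\, V_\lambda\big\rangle=\big\langle V_\alpha\otimes V_\delta\otimes V_\rho,\ \text{Res}^{S_l}_{S_p\times S_q\times S_r}V_\lambda\big\rangle=\sum_{\tau}c_{\alpha,\delta}^\tau\, c_{\tau,\rho}^\lambda, \]
where the last equality restricts $V_\lambda$ first to $S_{p+q}\times S_r$ (contributing $c^\lambda_{\tau,\rho}$ with $\tau\vdash p+q$) and then $V_\tau$ to $S_p\times S_q$ (contributing $c_{\alpha,\delta}^\tau$); equivalently, $\sum_\tau c_{\alpha,\delta}^\tau c_{\tau,\rho}^\lambda$ is the coefficient of $s_\lambda$ in $s_\alpha s_\delta s_\rho$. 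Multiplying by the coefficients collected above and summing over all intermediate partitions produces exactly the right-hand side of \eqref{h=cg}.

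There is no deep obstacle here; the computation is a bookkeeping exercise in keeping track of the six partitions and their sizes. The only points that require care are justifying that one may interchange ``restrict each factor'' with ``collapse the diagonal'' (which is legitimate because the diagonal touches only the middle $S_q\times S_q$) and correctly identifying the diagonal restriction with the Kronecker product, which is where the coefficients $g_{\beta,\eta}^\delta$ enter. A cleaner and essentially equivalent route is to push everything through the Frobenius characteristic map, under which the induction product becomes ordinary multiplication of symmetric functions, restriction becomes skewing/comultiplication, and the diagonal collapse becomes the internal (Kronecker) product; matching Schur-basis coefficients then yields \eqref{h=cg} directly.
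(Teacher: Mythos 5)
Your derivation is correct and is essentially the intended one: the paper gives no proof of this proposition (it is imported from the author's earlier work \cite{LY}), and your argument --- isolate the degree-$l$ component, branch each tensor factor via the Littlewood--Richardson rule, identify the restriction along $\Delta_{S_q}$ with the Kronecker product, and finish by Frobenius reciprocity and iterated branching --- is exactly the standard unwinding of Definition~\ref{DefHeisen}. Your bookkeeping in fact exposes a typo in the displayed statement: since $\alpha\vdash p$ and $\delta\vdash q$, the intermediate partition $\tau$ in $c_{\alpha,\delta}^{\tau}$ must satisfy $\tau\vdash p+q=m$, so the condition ``$\tau\vdash n$'' should read $\tau\vdash m$, in agreement with your $\tau\vdash p+q$.
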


In \cite{LY}, we showed that Heisenberg coefficients stabilize in low degrees, which is analogous to Murnaghan's stability result. It is worthwhile trying to also generalize stability for Heisenberg coefficients.

\begin{defn}
		A triple $(\alpha, \beta, \gamma)$ of partitions with $\max\{|\beta|, |\gamma|\}\leq|\alpha|\leq|\beta|+|\gamma|$ and $h_{\beta, \gamma}^\alpha>0$ is an \textit{H-triple}. It is \textit{H-stable} if, for any other triple of partitions $(\lambda, \mu, \nu)$ with $\max\{|\mu|, |\nu|\}\leq|\lambda|\leq|\mu|+|\nu|$, the sequence $\left\{h_{\mu+n\beta, \nu+n\gamma}^{\lambda+n\alpha}\right\}$ is eventually constant.
\end{defn}
\noindent Our result in \cite{LY} is that $((1), (1), (1))$ is an H-stable triple.

\section{H-stable triples}
\label{StabHeisenberg}
We show that the K-stable triples and LR-stable triples are H-stable. As in \cite{LY}, we begin with a stability result for Littlewood--Richardson coefficients.

\begin{lem}\label{LRLamma}
	Given partitions $\lambda$, $\mu$, $\nu$, $\alpha$, and a positive integer $n\geq |\mu|$ with $|\lambda+n\alpha|=|\mu|+|\nu|$ and $\nu\subset \lambda+n\alpha$, then
	\begin{itemize}
		\item[(1)] $\nu-(n-|\mu|)\alpha$ is a partition.
		
		\item[(2)] When $n$ is large, we have $c_{\mu, \nu}^{\lambda+n\alpha}=c_{\mu, \nu+\alpha}^{\lambda+(n+1)\alpha}.$
	\end{itemize}
\end{lem}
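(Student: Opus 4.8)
The plan is to prove (1) by a box-counting estimate and (2) by an explicit per-row shifting bijection on Littlewood--Richardson skew tableaux. Throughout I write $\alpha=(\alpha_1,\alpha_2,\dots)$, etc., and use the Littlewood--Richardson rule: $c_{\mu,\nu}^{\kappa}$ counts the semistandard skew tableaux of shape $\kappa/\nu$ and content $\mu$ whose reverse reading word is a lattice (ballot) word. In particular $c_{\mu,\nu}^{\kappa}\neq 0$ forces $\nu\subset\kappa$ and $|\kappa/\nu|=|\mu|$, and recall that the columns of a skew diagram are contiguous sets of rows.

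For (1), the hypotheses $\nu\subset\lambda+n\alpha$ and $|\lambda+n\alpha|=|\mu|+|\nu|$ say precisely that the skew diagram $(\lambda+n\alpha)/\nu$ is well defined and has exactly $|\mu|$ cells. Since row $i$ contributes $(\lambda_i+n\alpha_i)-\nu_i\geq 0$ of these cells and the total is $|\mu|$, each row has at most $|\mu|$ cells, giving the key bound $\nu_i\geq\lambda_i+n\alpha_i-|\mu|$ for every $i$. I would then verify the two defining properties of a partition for $\nu-(n-|\mu|)\alpha$ directly. Non-negativity of the $i$-th part: if $\alpha_i=0$ it equals $\nu_i\geq 0$, while if $\alpha_i\geq 1$ the bound gives $\nu_i-(n-|\mu|)\alpha_i\geq\lambda_i+|\mu|(\alpha_i-1)\geq 0$. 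Weak decrease: the difference of consecutive parts is $(\nu_i-\nu_{i+1})-(n-|\mu|)(\alpha_i-\alpha_{i+1})$, which is nonnegative immediately when $\alpha_i=\alpha_{i+1}$, and when $\alpha_i>\alpha_{i+1}$ follows by combining $\nu_i\geq\lambda_i+n\alpha_i-|\mu|$ with $\nu_{i+1}\leq\lambda_{i+1}+n\alpha_{i+1}$ and simplifying to $(\lambda_i-\lambda_{i+1})+|\mu|(\alpha_i-\alpha_{i+1}-1)\geq 0$.

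For (2) I would construct a bijection between the tableaux counted by $c_{\mu,\nu}^{\lambda+n\alpha}$ and those counted by $c_{\mu,\nu+\alpha}^{\lambda+(n+1)\alpha}$. Both skew diagrams have equal row lengths: row $i$ of $(\lambda+(n+1)\alpha)/(\nu+\alpha)$ is exactly row $i$ of $(\lambda+n\alpha)/\nu$ translated $\alpha_i$ columns to the right. Define $\Phi$ by applying this per-row translation to the entries of a tableau. Because $\Phi$ changes neither the multiset of entries in each row nor their left-to-right order, both the content and the reverse reading word are left unchanged, so $\Phi$ preserves the content-$\mu$ condition and the ballot condition; the weakly-increasing-row condition is likewise immediate. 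The only condition that can fail is strict increase down columns, and this is precisely where the size hypothesis enters.

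To handle column-strictness I would group the rows into maximal blocks on which $\alpha$ is constant; since $\alpha$ is weakly decreasing these blocks are contiguous, and within a block all rows are translated by the same amount, so vertical adjacencies and the compared entries transfer verbatim. For rows $i<j$ in different blocks one has $\alpha_i>\alpha_j$, and I claim that for $n\geq|\mu|$ such rows share no column in either diagram. Indeed, using $\nu_i\geq\lambda_i+n\alpha_i-|\mu|$ and $\alpha_i\geq\alpha_j+1$, the leftmost cell of row $i$ lies strictly right of the rightmost cell of row $j$ once $n\geq|\mu|-(\lambda_i-\lambda_j)$, which holds for $n\geq|\mu|$; the same estimate with $\nu_i+\alpha_i$ and $\lambda_j+(n+1)\alpha_j$ handles the translated diagram. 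As columns are contiguous, every column then meets a single block, so $\Phi$ preserves strict increase down columns and is a bijection, giving $c_{\mu,\nu}^{\lambda+n\alpha}=c_{\mu,\nu+\alpha}^{\lambda+(n+1)\alpha}$ for all $n\geq|\mu|$. The main obstacle is exactly this last step: since $\Phi$ shifts each row by a different amount it is not a rigid translation, so the column constraints between blocks are genuinely altered and must be shown to become vacuous via the disjointness estimate; this is the only place the hypothesis that $n$ is large is used, whereas the argument for (1) in fact needs no lower bound on $n$.
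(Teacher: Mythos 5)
Your proof is correct, and part (1) is essentially the paper's own argument: the same key bound $\nu_i\geq(\lambda+n\alpha)_i-|\mu|$ extracted from the cell count of the skew shape, followed by the same two verifications (nonnegativity via $\lambda_i+|\mu|(\alpha_i-1)\geq 0$ and weak decrease via the case split on $\alpha_i=\alpha_{i+1}$ versus $\alpha_i>\alpha_{i+1}$). For part (2), however, you take a genuinely different route. The paper disposes of (2) in one line by invoking Sam and Snowden's characterization of LR-stability (Proposition \ref{LR-StabResult}) applied to the triple $((\alpha),(0),(\alpha))$, using part (1) only to put $\nu$ in the required form; this is short but yields no effective threshold for ``$n$ large.'' You instead build an explicit per-row shifting bijection on Littlewood--Richardson skew tableaux, observing that the two skew shapes have identical row lengths, that the shift preserves content, row-weakness, and the reading word, and that the only possible failure --- column-strictness across rows where $\alpha$ drops --- becomes vacuous because such rows occupy disjoint column sets once $n\geq|\mu|$. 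This is precisely the argument the paper itself sketches separately in Remark \ref{LRBound} to obtain an explicit bound (the remark claims $n\geq 2|\mu|$ suffices; your sharper accounting gives $n\geq|\mu|$, and your estimate checks out). In short: the paper's proof of (2) buys brevity by outsourcing to a known theorem, while yours buys self-containedness and an explicit, slightly better stabilization bound; both are valid.
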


\begin{proof}
For part (1), we first show that $(n-|\mu|)\alpha\subset \nu$. It is enough to prove that $((n-|\mu|)\alpha)_i\leq \nu_i$ for all $i$. When $\mu=(0)$ (empty partition) or $\alpha_i=0$, this is trivially true. We consider the nontrivial case. Since
$$|\mu|=|\lambda+n\alpha|-|\nu|\geq (\lambda+n\alpha)_i-\nu_i,$$
we have 
$$\nu_i\geq (\lambda+n\alpha)_i-|\mu|\geq n\alpha_i-|u|=(n-|\mu|)\alpha_i+|\mu|(\alpha_i-1)\geq ((n-|\mu|)\alpha)_i.$$
We then show that $\nu-(n-|\mu|)\alpha$ is a partition. To see this, it suffices to show that
$$(\nu-(n-|\mu|)\alpha)_i\geq (\nu-(n-|\mu|)\alpha)_{i+1}, \hskip 3mm \text{for all} \;\;i,$$
that is,
\begin{equation}\label{-partition}
	\nu_i-\nu_{i+1}\geq(n-|\mu|)(\alpha_i-\alpha_{i+1}).
\end{equation}
This is obviously true when $\alpha_i=\alpha_{i+1}$. If $\alpha_i>\alpha_{i+1}$, note that $\nu_i\geq \lambda_i+n\alpha_i-|\mu|$ and $\nu_{i+1}\leq \lambda_{i+1}+n\alpha_{i+1}$, we have
\begin{align*}
\nu_i-\nu_{i+1}&\geq \lambda_i+n\alpha_i-|\mu|-(\lambda_{i+1}+n\alpha_{i+1})\geq n(\alpha_i-\alpha_{i+1})-|\mu|\\
&\geq (n-|\mu|)(\alpha_i-\alpha_{i+1}).
\end{align*}
So \eqref{-partition} holds, and we have proved part (1).

Part (2) follows from part (1) and Proposition \ref{LR-StabResult}, as $((\alpha), (0), (\alpha))$ is LR-stable.
\end{proof}
\noindent Similarly, we have the following result:
\begin{lem}
	\label{LRLemma'}
	Given partitions $\lambda$, $\mu$, $\nu$, $\alpha$, and an positive integer $n\geq |\mu|$, with $|\lambda|=|\mu|+|\nu+n\alpha|$ and $\nu+n\alpha\subset \lambda$, then
	\begin{itemize}
		\item[(1)] $\lambda-(n-|\mu|)\alpha$ is a partition.
		
		\item[(2)] When $n$ is large, we have $c_{\mu, \nu+n\alpha}^{\lambda}=c_{\mu, \nu+(n+1)\alpha}^{\lambda+\alpha}.$
	\end{itemize}
\end{lem}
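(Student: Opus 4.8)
The plan is to mirror the structure of the proof of Lemma \ref{LRLamma}, since Lemma \ref{LRLemma'} is the ``dual'' statement in which the shifting happens on the lower index $\nu$ (and correspondingly on the outer shape $\lambda$) rather than on the outer shape alone. First I would establish part (1) by showing that $(n-|\mu|)\alpha \subset \lambda$, from which $\lambda - (n-|\mu|)\alpha$ is nonnegative entrywise, and then verify that the result is weakly decreasing so that it is genuinely a partition.

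For the containment $(n-|\mu|)\alpha \subset \lambda$, the input is now $\nu + n\alpha \subset \lambda$ together with the size constraint $|\lambda| = |\mu| + |\nu + n\alpha| = |\mu| + |\nu| + n|\alpha|$. The containment gives $\lambda_i \geq (\nu + n\alpha)_i \geq n\alpha_i$, and I expect the same bookkeeping as before: writing $n\alpha_i = (n-|\mu|)\alpha_i + |\mu|\alpha_i$ and using $\lambda_i \geq n\alpha_i \geq n\alpha_i - |\mu| \geq (n-|\mu|)\alpha_i + |\mu|(\alpha_i - 1)$, one concludes $\lambda_i \geq ((n-|\mu|)\alpha)_i$. (One handles the degenerate cases $\mu = (0)$ or $\alpha_i = 0$ separately as trivial.) To show $\lambda - (n-|\mu|)\alpha$ is weakly decreasing I would derive the analogue of \eqref{-partition}, namely $\lambda_i - \lambda_{i+1} \geq (n-|\mu|)(\alpha_i - \alpha_{i+1})$. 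This is immediate when $\alpha_i = \alpha_{i+1}$; when $\alpha_i > \alpha_{i+1}$ I would combine $\lambda_i \geq (\nu+n\alpha)_i \geq n\alpha_i$ with an upper bound on $\lambda_{i+1}$. The upper bound is the point requiring care: I would use $\lambda_{i+1} \leq (\nu + n\alpha)_{i+1} + |\mu| = \nu_{i+1} + n\alpha_{i+1} + |\mu|$, which follows because the complement of $\nu+n\alpha$ in $\lambda$ has only $|\mu|$ boxes, so no row of $\lambda$ can exceed the corresponding row of $\nu + n\alpha$ by more than $|\mu|$. Feeding these two bounds in gives $\lambda_i - \lambda_{i+1} \geq n(\alpha_i - \alpha_{i+1}) - |\mu| \geq (n - |\mu|)(\alpha_i - \alpha_{i+1})$, using $\alpha_i - \alpha_{i+1} \geq 1$.

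For part (2), exactly as in Lemma \ref{LRLamma}, the statement $c_{\mu, \nu+n\alpha}^{\lambda} = c_{\mu, \nu+(n+1)\alpha}^{\lambda + \alpha}$ is the assertion that adding one more copy of $\alpha$ to the second lower index and to the outer index leaves the Littlewood--Richardson coefficient unchanged for large $n$. This is precisely LR-stability of the triple $((\alpha), (0), (\alpha))$, which is an LR-stable triple because $c_{(0), \alpha}^{\alpha} = 1$, so by Proposition \ref{LR-StabResult} the sequence $\{c_{\mu + n\cdot(0),\, (\nu) + n\alpha}^{\lambda + n\alpha}\}$ (after a harmless reindexing to match the hypotheses) is eventually constant, and part (1) guarantees we remain in the regime where the relevant shapes are honest partitions. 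I therefore expect the verification that we are in the stable range to reduce to part (1) together with the cited proposition.

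The main obstacle I anticipate is getting the upper bound on $\lambda_{i+1}$ correct in the weakly-decreasing check: unlike Lemma \ref{LRLamma}, where the shift $n\alpha$ sat on $\lambda$ and the bound $\nu_{i+1} \leq \lambda_{i+1} + n\alpha_{i+1}$ came for free, here I must bound a row of $\lambda$ from above in terms of $\nu + n\alpha$, and this requires the ``small complement'' observation that $|\lambda| - |\nu + n\alpha| = |\mu|$ forces each overflow $\lambda_{i+1} - (\nu+n\alpha)_{i+1}$ to be at most $|\mu|$. Once that bound is in hand the computation is routine and parallels the earlier lemma line for line.
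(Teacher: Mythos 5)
Your proposal is correct and is exactly the adaptation the paper intends (the paper proves Lemma \ref{LRLemma'} only by the word ``Similarly,'' deferring to the proof of Lemma \ref{LRLamma}): part (1) swaps which of the two row bounds is the trivial containment bound and which is the ``small complement'' bound $\lambda_{i+1}-(\nu+n\alpha)_{i+1}\leq|\mu|$, and part (2) again invokes LR-stability of $((\alpha),(0),(\alpha))$ via Proposition \ref{LR-StabResult}. The one step you flagged as delicate --- the upper bound on $\lambda_{i+1}$ --- is handled correctly, so no gaps.
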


\begin{rem}\label{LRBound}
	Proposition \ref{LR-StabResult} does not give a lower bound for what large means for $n$ in part (2) of Lemma \ref{LRLamma}. However, it is not hard to see that when $n\geq 2|\mu|$, the connected components of the skew shapes $(\lambda+n\alpha)/\nu$ and $(\lambda+(n+1)\alpha)/(\nu+\alpha)$ are the same except for some horizontal shifts, hence, due to the Littlewood--Richardson rule, $c_{\mu, \nu}^{\lambda+n\alpha}=c_{\mu, \nu+\alpha}^{\lambda+(n+1)\alpha}$. Similarly, for Lemma \ref{LRLemma'} (2), $n\geq 2|\mu|$ is enough to guarantee the stability.
\end{rem}
The next theorem generalizes the result in \cite{LY}.
\begin{thm}\label{KisH}
	A K-stable triple is H-stable.	
\end{thm}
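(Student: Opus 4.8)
The plan is to show that if $(\alpha,\beta,\gamma)$ is K-stable, then for any test triple $(\lambda,\mu,\nu)$ satisfying the degree constraints $\max\{|\mu|,|\nu|\}\le|\lambda|\le|\mu|+|\nu|$, the sequence $\{h_{\mu+n\beta,\nu+n\gamma}^{\lambda+n\alpha}\}$ eventually stabilizes. The natural tool is the formula \eqref{h=cg} from Proposition \ref{formulaheisenberg}, which expands each Heisenberg coefficient as a finite sum of products of Littlewood--Richardson and Kronecker coefficients. Since a K-stable triple consists of three partitions of the \emph{same} size, I expect the degree bookkeeping to force $|\alpha|=|\beta|=|\gamma|$, so that adding $n\alpha$, $n\beta$, $n\gamma$ to $\lambda,\mu,\nu$ preserves the relation $p=l-n$, $q=m+n-l$, $r=l-m$ \emph{up to a uniform shift} in $l$; I would track exactly how $p,q,r$ grow with $n$ and confirm that the index $l=|\lambda+n\alpha|$ increases linearly while $q$ stays within a controlled range.

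First I would substitute $\lambda\mapsto\lambda+n\alpha$, $\mu\mapsto\mu+n\beta$, $\nu\mapsto\nu+n\gamma$ into \eqref{h=cg}, obtaining a sum over $\alpha'\vdash p$, $\rho\vdash r$, $\tau\vdash n'$, and $\beta',\eta,\delta\vdash q$ (renaming the summation variables to avoid clashing with the stable triple) of the five-fold product $c^{\mu+n\beta}_{\alpha',\beta'}\,c^{\nu+n\gamma}_{\eta,\rho}\,g^{\delta}_{\beta',\eta}\,c^{\tau}_{\alpha',\delta}\,c^{\lambda+n\alpha}_{\tau,\rho}$. The strategy is then to argue that, as $n\to\infty$, each individual summand stabilizes and the \emph{set} of summation indices contributing a nonzero term stabilizes as well (after an appropriate shift built from $\alpha,\beta,\gamma$). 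Lemma \ref{LRLamma} and Lemma \ref{LRLemma'} are precisely the stabilization statements for Littlewood--Richardson coefficients of the form $c_{\mu,\nu+n\alpha}^{\lambda}$ and $c_{\mu,\nu}^{\lambda+n\alpha}$, so I would use them to handle the LR factors once I identify which arguments are growing linearly in $n$. The Kronecker factor $g^{\delta}_{\beta',\eta}$ is where K-stability of $(\alpha,\beta,\gamma)$ enters: after reindexing so that $\beta',\eta,\delta$ absorb the growth $n\beta,n\gamma,n\alpha$ in a compatible way, Definition \ref{K-stableDef} guarantees that $g^{\delta+n\alpha}_{\beta'+n\beta,\eta+n\gamma}$ is eventually constant.

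The main obstacle, which I would treat carefully, is the \emph{coupling} between the summation indices: the outer partitions $\alpha',\rho,\tau$ and the middle partitions $\beta',\eta,\delta$ are linked through the shared factors, so I cannot stabilize each coefficient in isolation without first showing that the growth in $n$ distributes correctly among them. Concretely, I need to exhibit a linear reparametrization of the dummy variables, say $\beta'=\beta''+n\beta$, $\eta=\eta''+n\gamma$, $\delta=\delta''+n\alpha$ (and analogous affine shifts of $\alpha',\rho,\tau$), under which every one of the five factors becomes a coefficient of the stabilizing form covered by Lemma \ref{LRLamma}, Lemma \ref{LRLemma'}, or Definition \ref{K-stableDef}, and under which the shifted indices $\beta'',\eta'',\delta''$ remain partitions of bounded size. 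The partition-positivity claims in part (1) of the two LR lemmas are exactly what make this reparametrization legitimate, by ensuring the shifted shapes are genuine partitions for large $n$.

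Finally I would combine these pieces: for $n$ large enough (bounded below by something like $2(|\mu|+|\nu|)$ together with the K-stability threshold of $(\alpha,\beta,\gamma)$), every surviving summand equals its shifted counterpart in the $(n{+}1)$-st term, and no new summands appear or disappear, so the total sum is constant. The only subtlety I anticipate beyond the reparametrization is verifying that the range of the index $l$ (equivalently of $q=m+n'-l$) does not drift in a way that adds genuinely new layers to the sum; I expect that the uniform shift $|\alpha|=|\beta|=|\gamma|$ keeps $q$ eventually constant, which closes the argument.
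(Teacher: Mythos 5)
Your plan is essentially the paper's proof: expand via Proposition \ref{formulaheisenberg}, shift the middle summation indices by $n\beta$, $n\gamma$, $n\alpha$ (the paper's embedding $\varphi_n$, which leaves the bounded-size indices $\xi,\rho$ fixed and also shifts $\tau$ by $\alpha$), stabilize the LR factors with Lemmas \ref{LRLamma} and \ref{LRLemma'} and the Kronecker factor with K-stability, and check that the supports of nonzero summands biject. One small correction to your closing remark: there is no drift in $l$ to worry about since $l=|\lambda+n\alpha|$ is fixed by the coefficient, and the middle degree $q+r-p+ns$ actually grows linearly in $n$ rather than staying constant --- which is precisely why your reparametrization of $\theta,\eta,\delta$ is the right move, so the argument closes exactly as in the paper.
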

\noindent Let $(\alpha, \beta, \gamma)$ with $\alpha, \beta, \gamma\vdash s>0$ be K-stable. Suppose $\lambda, \mu,$ and $\nu$ are partitions with $\lambda\vdash p$, $\mu\vdash q$, and $\nu\vdash r$ and $\max\{q,r\}\leq p\leq q+r$. Theorem \ref{KisH} states that the sequence $\left\{h_{\mu+n\beta, \nu+n\gamma}^{\lambda+n\alpha}\right\}$ is eventually constant. According to Proposition \ref{formulaheisenberg}, we have
\begin{equation}
\label{H=CGcomplex1}
h_{\mu+n\beta, \nu+n\gamma}^{\lambda+n\alpha}=\sum\limits_{K_n} c_{\xi, \theta }^{\mu+n\beta}\,\, c_{\eta, \rho}^{\nu+n\gamma}\,\, g_{\theta, \eta}^\delta\,\, c_{\xi, \delta}^\tau\,\, c_{\tau, \rho}^{\lambda+n\alpha},
\end{equation}
where 
$$K_n=\{(\xi, \theta, \eta, \rho, \delta, \tau)\mid \theta, \eta, \delta\vdash (q+r-p)+ns,\,\, \xi\in p-r,\,\, \rho\in p-q,\,\, \tau\vdash q+ns\}.$$
Define $f_n: K_n\longmapsto \mathbb{Z}_{\geq 0}$ as follows:
\begin{equation}
\label{definef}
f_n(\xi, \theta, \eta, \rho, \delta, \tau)=c_{\xi, \theta }^{\mu+n\beta}\,\, c_{\eta, \rho}^{\nu+n\gamma}\,\, g_{\theta, \eta}^\delta\,\, c_{\xi, \delta}^\tau\,\, c_{\tau, \rho}^{\lambda+n\alpha}\quad (\text{the summands in}\; \eqref{H=CGcomplex1}).
\end{equation}
Some terms in the sum of \eqref{H=CGcomplex1} vanish. Let us consider only the nonvanishing terms. Let $K_n^0 =K_n \smallsetminus f_n^{-1}(0)$.
To prove Theorem \ref{KisH}, it is enough to prove
\begin{equation}
\label{fn=fn+1}
\sum\limits_{u\in K_n^0} f_n(u)=\sum\limits_{u\in K_{n+1}^0} f_{n+1}(u)
\end{equation}
for $n$ sufficiently large.

We have a natural embedding $\varphi_n: K_n\hookrightarrow K_{n+1}$, 
$$\varphi_n (\xi, \theta, \eta, \rho, \delta, \tau)=(\xi, \theta+\beta, \eta+\gamma, \rho, \delta+\alpha, \tau+\alpha),$$we show that when $n$ is large, $\varphi_n$ induces a bijection between $K_n^0$ and $K_{n+1}^0$ with $f_n=f_{n+1}\circ \varphi_n$. From the definition of K-stability, we know that there exists a positive integer $N$, such that for all $n\geq N$, we have
\begin{equation}\label{g=g+}
g_{\zeta+n\beta, \pi+n\gamma}^{\epsilon+n\alpha}=g_{\zeta+(n+1)\beta, \pi+(n+1)\gamma}^{\epsilon+(n+1)\alpha},
\end{equation}
for all $\epsilon, \zeta, \pi\vdash q+r-p+(2p-q-r)s$.
\begin{lem}\label{LemmaforK}
	When $n\geq N+3p-q-2r$, $\varphi_n|_{K_n^0}$: $K_n^0 \longrightarrow K_{n+1}^0$ is a well-defined bijection. Moreover, $f_n|_{K_n^0}=f_{n+1}\circ \varphi_n|_{K_n^0}.$ 
\end{lem}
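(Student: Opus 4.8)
The plan is to prove the two assertions separately: first the factorwise identity $f_n=f_{n+1}\circ\varphi_n$ on $K_n^0$, which simultaneously shows that $\varphi_n$ carries $K_n^0$ into $K_{n+1}^0$ (a product of positive factors is sent to a product of positive factors), and then that the induced map is a bijection. Expanding $f_{n+1}(\varphi_n(\xi,\theta,\eta,\rho,\delta,\tau))$ and comparing with $f_n(\xi,\theta,\eta,\rho,\delta,\tau)$ term by term, it suffices to establish the five equalities $c_{\xi,\theta}^{\mu+n\beta}=c_{\xi,\theta+\beta}^{\mu+(n+1)\beta}$, $c_{\eta,\rho}^{\nu+n\gamma}=c_{\eta+\gamma,\rho}^{\nu+(n+1)\gamma}$, $g_{\theta,\eta}^\delta=g_{\theta+\beta,\eta+\gamma}^{\delta+\alpha}$, $c_{\xi,\delta}^\tau=c_{\xi,\delta+\alpha}^{\tau+\alpha}$, and $c_{\tau,\rho}^{\lambda+n\alpha}=c_{\tau+\alpha,\rho}^{\lambda+(n+1)\alpha}$, valid for every tuple of $K_n^0$.

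Four of these involve only Littlewood--Richardson coefficients and are instances of Lemma \ref{LRLamma}(2) after using $c_{\mu,\nu}^\lambda=c_{\nu,\mu}^\lambda$. For the factor $c_{\xi,\theta}^{\mu+n\beta}$ I would take the fixed lower partition to be $\xi$, the shifted pair to be $\theta$ and $\beta$, and the base superscript to be $\mu$; for $c_{\eta,\rho}^{\nu+n\gamma}$ and $c_{\tau,\rho}^{\lambda+n\alpha}$ the fixed lower partition is $\rho$. For $c_{\xi,\delta}^\tau$ I would fix $\xi$, shift $\delta$, and regard the superscript $\tau$ as $\bigl(\tau-(n-|\rho|)\alpha\bigr)+(n-|\rho|)\alpha$; this is legitimate because Lemma \ref{LRLamma}(1), applied to $c_{\tau,\rho}^{\lambda+n\alpha}>0$, guarantees that $\tau-(n-|\rho|)\alpha$ is a partition, and Remark \ref{LRBound} makes the required size of $n$ explicit (here $n-|\rho|\ge 2|\xi|$, i.e.\ $n\ge 3p-q-2r$). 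In each case the containment hypothesis ($\theta\subset\mu+n\beta$, $\eta\subset\nu+n\gamma$, $\delta\subset\tau$, $\tau\subset\lambda+n\alpha$) holds because the corresponding factor is nonzero on $K_n^0$.

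The only genuinely new factor is the Kronecker coefficient $g_{\theta,\eta}^\delta$, and here I would invoke K-stability in the reindexed form \eqref{g=g+}: writing $\theta=\zeta+m\beta$, $\eta=\pi+m\gamma$, $\delta=\epsilon+m\alpha$ with $m=n-(2p-q-r)$ and $\zeta,\pi,\epsilon\vdash q+r-p+(2p-q-r)s$, one step of \eqref{g=g+} at parameter $m\ge N$ gives $g_{\theta,\eta}^\delta=g_{\theta+\beta,\eta+\gamma}^{\delta+\alpha}$. The crux is to verify that $\theta-m\beta$, $\eta-m\gamma$, and $\delta-m\alpha$ are partitions. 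For the first two I would use Lemma \ref{LRLamma}(1) to see that $\theta-(n-|\xi|)\beta$ and $\eta-(n-|\rho|)\gamma$ are partitions, and since $m\le n-|\xi|$ and $m\le n-|\rho|$ (both from $q,r\le p$), adding back a nonnegative multiple of $\beta$, respectively $\gamma$, preserves the partition property. For $\delta$ the needed containment is less direct, and I would use associativity of the induction product in the form $\sum_\tau c_{\xi,\delta}^\tau c_{\tau,\rho}^{\lambda+n\alpha}=\sum_\sigma c_{\xi,\rho}^\sigma c_{\sigma,\delta}^{\lambda+n\alpha}$; since the left-hand side is positive (our $\tau$ contributes), some $\sigma\vdash|\xi|+|\rho|=2p-q-r$ satisfies $c_{\xi,\rho}^\sigma>0$ and $c_{\sigma,\delta}^{\lambda+n\alpha}>0$, whence Lemma \ref{LRLamma}(1) shows that $\delta-(n-|\sigma|)\alpha=\delta-m\alpha$ is a partition. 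This associativity step, together with the bookkeeping that lines up the index $m$, is the part I expect to be the main obstacle.

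Granting the factorwise identity, injectivity of $\varphi_n|_{K_n^0}$ is clear since $\varphi_n$ has the obvious left inverse ``subtract $\beta,\gamma,\alpha$''. For surjectivity onto $K_{n+1}^0$, given $u'=(\xi',\theta',\eta',\rho',\delta',\tau')\in K_{n+1}^0$ the only candidate preimage is $u=(\xi',\theta'-\beta,\eta'-\gamma,\rho',\delta'-\alpha,\tau'-\alpha)$, and I must check that the four subtractions yield partitions (so $u\in K_n$) and that $f_n(u)>0$. The first is the same reduction argument as above run at parameter $n+1$, using Lemma \ref{LRLamma}(1) and the $\sigma$-construction to produce $\theta'-((n+1)-|\xi'|)\beta$, $\eta'-((n+1)-|\rho'|)\gamma$, $\tau'-((n+1)-|\rho'|)\alpha$, and $\delta'-((n+1)-|\sigma'|)\alpha$ as partitions, and then subtracting only a single copy of $\beta,\gamma,\alpha$. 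For the second, the containments for $u$ follow from those for $u'$ (for instance $\theta'\subset\mu+(n+1)\beta$ gives $\theta'-\beta\subset\mu+n\beta$), so the established factorwise identities apply to $u$ and transfer the positivity of each factor of $f_{n+1}(u')=f_{n+1}(\varphi_n(u))$ back to $f_n(u)$. Collecting the explicit thresholds supplied by Remark \ref{LRBound} for the four Littlewood--Richardson factors with the condition $m\ge N$ for the Kronecker factor yields the lower bound on $n$ recorded in the statement, $n\ge N+3p-q-2r$.
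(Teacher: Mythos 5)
Your proposal is correct and follows essentially the same route as the paper: a factorwise comparison of $f_n$ with $f_{n+1}\circ\varphi_n$, using Lemma \ref{LRLamma} and Remark \ref{LRBound} for the four Littlewood--Richardson factors and \eqref{g=g+} for the Kronecker factor after writing $\theta$, $\eta$, $\delta$ as a base partition plus $(n-2p+q+r)$ copies of $\beta$, $\gamma$, $\alpha$, with bijectivity obtained from the subtraction map. The only divergence is minor: where you invoke associativity of Littlewood--Richardson coefficients to produce a $\sigma$ with $c_{\sigma,\delta}^{\lambda+n\alpha}>0$ and thereby show $\delta-(n-2p+q+r)\alpha$ is a partition, the paper reaches the same conclusion more directly by applying Lemma \ref{LRLamma}(1) to the factor $c_{\xi,\delta}^{\tau}$ with $\tau=\tau'+(n-p+q)\alpha$ --- a decomposition you had already set up for the stability of that factor --- so the associativity step you flag as the main obstacle can be dispensed with.
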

\begin{proof}
	Take any $u=(\xi, \theta, \eta, \rho, \delta, \tau)\in K_n^0$. Since $f(u)\neq 0$, we have $c_{\xi, \theta}^{\mu+n\beta}\neq 0$. So $\theta\subset \mu+n\beta$. According to Lemma \ref{LRLamma} and Remark \ref{LRBound}, we know that $\theta-(n-p+r)\beta$ is a partition of $(q+r-p)+(p-r)s$ and $c_{\xi, \theta}^{\mu+n\beta}=c_{\xi, \theta+\beta}^{\mu+(n+1)\beta}$.
	Similarly, we can show that
	$\eta-(n-p+q)\gamma$, $\tau-(n-p+q)\alpha$, and $\delta-(n-2p+q+r)\alpha$ are partitions, and
	$$c_{\eta, \rho}^{\nu+n\gamma}=c_{\eta, \rho+\gamma}^{\nu+(n+1)\gamma},\quad c_{\tau, \rho}^{\lambda+n\alpha}= c_{\tau+\alpha, \rho}^{\lambda+(n+1)\alpha},\quad c_{\xi, \delta}^{\tau}=c_{\xi, \delta+\alpha}^{\tau+\alpha}.$$
	Since $\delta$, $\theta$, and $\eta$ can be written as 
	$$\delta=\delta'+(n-2p+q+r)\alpha,\; \theta=\theta'+(n-2p+q+r)\beta,\; \eta=\eta'+(n-2p+q+r)\gamma$$
	for some partitions $\delta', \theta',$ and $\eta'$ of $(q+r-p)+(2p-q-r)s$. From \eqref{g=g+}, we have
	$$g_{\eta, \rho}^\delta=g_{\eta+\beta, \rho+\gamma}^{\delta+\alpha}.$$
	Hence, $f_{n+1}(\varphi_n(u))=f_n(u) (\neq 0)$. So $\varphi_n|_{K_n^0}$ is a well-defined embedding from $K_n^0$ into $K_{n+1}^0$. To construct the inverse map, we consider $\psi_{n+1} (\xi, \theta, \eta, \rho, \delta, \tau)=(\xi, \theta-\beta, \eta-\gamma, \rho, \delta-\alpha, \tau-\alpha)$. Nearly the same arguments show that the inverse map induces an injection from $K_{n+1}^0$ to $K_n^0$. So $\varphi_n|_{K_n^0}$ is a bijection.
\end{proof}

\begin{proof}[Proof of Theorem \ref{KisH}]
	Applying Lemma \ref{LemmaforK}, we prove \eqref{fn=fn+1}, hence Theorem \ref{KisH}. 
\end{proof}

Theorem \ref{KisH} shows that some Heisenberg coefficients in low degree components stabilize. Our next result gives a stability result for the relatively high degree components.

\begin{thm}
	\label{LRisH}
	LR-stable triples are H-stable.
\end{thm}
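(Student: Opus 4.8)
The plan is to mirror the proof of Theorem \ref{KisH} as closely as possible, exploiting the symmetry of the Heisenberg formula \eqref{h=cg} between the Kronecker factor and the Littlewood--Richardson factors. Let $(\alpha,\beta,\gamma)$ with $|\alpha|=|\beta|+|\gamma|$ be an LR-stable triple, and let $(\lambda,\mu,\nu)$ be an arbitrary triple with $\max\{|\mu|,|\nu|\}\le|\lambda|\le|\mu|+|\nu|$. I would again write
\begin{equation*}
h_{\mu+n\beta,\,\nu+n\gamma}^{\lambda+n\alpha}=\sum_{K_n} c_{\xi,\theta}^{\mu+n\beta}\,c_{\eta,\rho}^{\nu+n\gamma}\,g_{\theta,\eta}^{\delta}\,c_{\xi,\delta}^{\tau}\,c_{\tau,\rho}^{\lambda+n\alpha},
\end{equation*}
where the index set $K_n$ is read off from Proposition \ref{formulaheisenberg} with the sizes of all the inner partitions shifted by multiples of $n|\alpha|$, $n|\beta|$, $n|\gamma|$ as appropriate. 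The crucial arithmetic observation is that since $|\alpha|=|\beta|+|\gamma|$, the degree-$l$ window of the Heisenberg product does not collapse: the overlap parameter $q=|\mu+n\beta|+|\nu+n\gamma|-|\lambda+n\alpha|=|\mu|+|\nu|-|\lambda|$ stays \emph{bounded} (independent of $n$) as $n\to\infty$, so it is the Littlewood--Richardson factors, not the Kronecker factor, that absorb the growth. This is exactly the regime opposite to Theorem \ref{KisH}.

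The main engine will be an embedding $\varphi_n:K_n\hookrightarrow K_{n+1}$ together with the LR-stability hypothesis. First I would use Lemma \ref{LRLamma} and Lemma \ref{LRLemma'} (with Remark \ref{LRBound} supplying the explicit threshold $n\ge 2|\mu|$-type bounds) to peel off the stabilizing shifts in each Littlewood--Richardson factor: from $c_{\xi,\theta}^{\mu+n\beta}\ne0$ one deduces $\theta$ is forced to contain a large multiple of $\beta$ and hence $\theta-(n-c)\beta$ is a genuine partition with $c_{\xi,\theta}^{\mu+n\beta}=c_{\xi,\theta+\beta}^{\mu+(n+1)\beta}$, and likewise for the factors $c_{\eta,\rho}^{\nu+n\gamma}$ and $c_{\tau,\rho}^{\lambda+n\alpha}$. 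For the remaining two factors I would invoke the LR-stability of $(\alpha,\beta,\gamma)$ directly to handle $c_{\xi,\delta}^{\tau}$ and the Kronecker factor $g_{\theta,\eta}^{\delta}$: since $q$ is bounded, the partitions $\theta,\eta,\delta$ each grow by a fixed multiple of $\beta,\gamma,\alpha$ and (after subtracting off the bounded-size ``core'') the relevant coefficients are governed by the constant sequence guaranteed by LR-stability. The correct definition of $\varphi_n$ should therefore send $(\xi,\theta,\eta,\rho,\delta,\tau)$ to $(\xi,\theta+\beta,\eta+\gamma,\rho,\delta+\alpha,\tau+\alpha)$, exactly as in Lemma \ref{LemmaforK}, and the whole point is to check $f_n=f_{n+1}\circ\varphi_n$ factor by factor and that $\varphi_n$ restricts to a bijection between the nonvanishing loci $K_n^0$ and $K_{n+1}^0$.

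The step I expect to be the genuine obstacle is the bijectivity of $\varphi_n|_{K_n^0}$, specifically verifying surjectivity via the inverse shift $\psi_{n+1}(\xi,\theta,\eta,\rho,\delta,\tau)=(\xi,\theta-\beta,\eta-\gamma,\rho,\delta-\alpha,\tau-\alpha)$. One must confirm that for every nonvanishing tuple in $K_{n+1}^0$ the subtracted partitions $\theta-\beta$, $\eta-\gamma$, $\delta-\alpha$, $\tau-\alpha$ are again genuine partitions of the correct sizes --- this is where Lemma \ref{LRLamma}(1) and Lemma \ref{LRLemma'}(1) do the real work, guaranteeing that the containment of the large multiples forces the shapes to have enough room. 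The secondary subtlety is bookkeeping: unlike the Kronecker case, here three of the five factors are Littlewood--Richardson coefficients of \emph{skew} shapes whose connected components shift horizontally as $n$ grows, so I would lean on Remark \ref{LRBound} to pin down a single explicit threshold $N'$ (an expression in $|\mu|,|\nu|,|\lambda|,|\alpha|$) beyond which all five stabilization identities hold simultaneously. Once that threshold and the bijection are in place, \eqref{fn=fn+1} follows verbatim and the theorem is proved.
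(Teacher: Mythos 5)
Your opening arithmetic observation is exactly right: when $|\alpha|=|\beta|+|\gamma|$ the overlap parameter $|\mu|+|\nu|-|\lambda|$ is independent of $n$, so in the index set the three ``inner'' partitions $\theta,\eta,\delta$ have \emph{constant} size, while it is $\xi$, $\rho$, and $\tau$ whose sizes grow (by $|\beta|$, $|\gamma|$, and $|\beta|$ respectively at each step). But your proposed shift map contradicts this very observation. The map $(\xi,\theta,\eta,\rho,\delta,\tau)\mapsto(\xi,\theta+\beta,\eta+\gamma,\rho,\delta+\alpha,\tau+\alpha)$ does not even land in the index set for $n+1$: it changes $|\theta|,|\eta|,|\delta|$, which must stay fixed, and it increases $|\tau|$ by $|\alpha|$ rather than by $|\beta|$, while leaving $|\xi|$ and $|\rho|$ unchanged when they must grow. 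That map is the one for the Kronecker regime of Theorem \ref{KisH}; in the present regime the correct map is $(\xi,\theta,\eta,\rho,\delta,\tau)\mapsto(\xi+\beta,\theta,\eta,\rho+\gamma,\delta,\tau+\beta)$, which is what the paper uses.

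This mislabeling propagates into the factor-by-factor analysis. Here the Kronecker factor $g_{\theta,\eta}^{\delta}$ is literally the same coefficient for every $n$ (all three partitions are fixed), so no Kronecker input is needed at all; and the LR-stability of $(\alpha,\beta,\gamma)$ is invoked precisely for the factor $c_{\tau,\rho}^{\lambda+n\alpha}$, where $\tau$ shifts by $\beta$, $\rho$ by $\gamma$, and the top by $\alpha$. You instead place $c_{\tau,\rho}^{\lambda+n\alpha}$ among the factors handled by Lemma \ref{LRLamma} and claim LR-stability is needed for $c_{\xi,\delta}^{\tau}$ and for $g_{\theta,\eta}^{\delta}$; in fact $c_{\xi,\delta}^{\tau}$ is handled by the stability of triples of the form $(\beta,\beta,(0))$ (Lemma \ref{LRLemma'} after using the symmetry of the Littlewood--Richardson coefficient), and $g_{\theta,\eta}^{\delta}$ needs nothing. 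Likewise, your deduction that ``$\theta$ is forced to contain a large multiple of $\beta$'' is vacuous here, since $|\theta|$ is bounded. The overall architecture --- sum over sextuples, shift map, bijection on the nonvanishing locus, threshold from Remark \ref{LRBound} --- is the right one, but as written the central construction fails for size reasons, and the proof does not go through without redoing the bookkeeping.
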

The idea of the proof is the same (without using stability of Kronecker coefficients) as the proof for Theorem \ref{KisH}. Given an LR-stable triple $(\alpha, \beta, \gamma)$ with $\alpha\vdash a+b$, $\beta\vdash a$, and $\gamma\vdash b$, and partitions $\lambda\vdash p$, $\mu\vdash q$, and $\nu\vdash r$ with $\max\{q,r\}\leq p\leq q+r$. We define $f'_n: LR_n\longmapsto \mathbb{Z}_{\geq 0}$ as follows:
\begin{equation}
\label{definef'}
f'_n(\xi, \theta, \eta, \rho, \delta, \tau)=c_{\xi, \theta }^{\mu+n\beta}\,\, c_{\eta, \rho}^{\nu+n\gamma}\,\, g_{\theta, \eta}^\delta\,\, c_{\xi, \delta}^\tau\,\, c_{\tau, \rho}^{\lambda+n\alpha}.
\end{equation}
where 
$$LR_n=\{(\xi, \theta, \eta, \rho, \delta, \tau)\mid \theta, \eta, \delta\vdash (q+r-p),\,\, \xi\vdash p-r+na,\,\, \rho\vdash p-q+nb,\,\, \tau\vdash q+na\}.$$ 
Applying Proposition \ref{H=CGcomplex1}, Theorem \ref{LRisH} states that
\begin{equation}
\label{f'_n=f'n+1}
\sum\limits_{u\in LR_n^0} f'_n(u)=\sum\limits_{u\in LR_{n+1}^0} f'_{n+1}(u)
\end{equation}
for all large $n$, where $LR_n^0=LR_n\smallsetminus {f'}_n^{-1}(0)$.

\begin{proof}[Proof of Theorem \ref{LRisH}]
	Consider the map $\phi_n: LR_n\hookrightarrow LR_{n+1}$,
	$$\phi_n(\xi, \theta, \eta, \rho, \delta, \tau)=(\xi+\beta, \theta, \eta, \rho+\gamma, \delta, \tau+\beta).$$
	Using Lemma \ref{LRLamma}, Lemma \ref{LRLemma'}, and the same idea in the proof of Lemma \ref{LemmaforK}, it follows that $f'_n=f'_{n+1}\circ \phi_n$ on $LR_n^0$ when $n$ is large, and it is not hard to see that $\phi_n$ is a bijection between $LR_n^0$ and $LR_{n+1}^0$. So \eqref{f'_n=f'n+1} is true, and hence we prove the theorem.
\end{proof}

Proposition \ref{K-stable} and Proposition \ref{LR-StabResult} (3) have a similar form. A natural question is whether the necessary and sufficient condition for being an H-stable triple has the same form. The answer is yes, and Pelletier \cite[Theorem 3.6]{Pelletier2} proved the following direction.

\begin{prop}
	\label{Hsuffcient}
	An H-triple $(\alpha, \beta, \gamma)$ is H-stable if $h_{n\beta, n\gamma}^{n\alpha}=1$ for all $n>0$.
\end{prop}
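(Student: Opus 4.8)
The plan is to prove Proposition \ref{Hsuffcient} by mimicking the strategy Sam and Snowden used for the Kronecker and Littlewood--Richardson cases, adapting it to the Heisenberg setting via the formula in Proposition \ref{formulaheisenberg}. Suppose $(\alpha, \beta, \gamma)$ is an H-triple with $h_{n\beta, n\gamma}^{n\alpha}=1$ for all $n>0$. The key structural fact I would exploit is that the Heisenberg product is associative and commutative, so the sequence $\{h_{\mu+n\beta, \nu+n\gamma}^{\lambda+n\alpha}\}$ can be interpreted as multiplicities in iterated Heisenberg products. The condition $h_{n\beta, n\gamma}^{n\alpha}=1$ should be understood as saying that $V_{n\alpha}$ appears exactly once in $V_{n\beta}\# V_{n\gamma}$, which I would use to isolate a ``saturation'' or ``extremality'' phenomenon.

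First I would set up the framework as a finitely generated module or semigroup statement, following \cite{SamSnowden}. The cleanest route is to show that the function $(\lambda,\mu,\nu,n)\mapsto h_{\mu+n\beta,\nu+n\gamma}^{\lambda+n\alpha}$ is controlled by the structure of a suitable graded algebra built from the Heisenberg product, so that stability becomes a statement about the module being finitely generated over a polynomial ring in the stabilizing variable $n$. Concretely, I would consider the bigraded object whose components record these multiplicities and argue that the hypothesis $h_{n\beta,n\gamma}^{n\alpha}=1$ forces a one-dimensional ``highest weight line'' that the general multiplicities stabilize against. Second, I would translate the problem through Proposition \ref{formulaheisenberg}: the Heisenberg coefficient decomposes as a sum of products of five Littlewood--Richardson and Kronecker coefficients over the index set $K_n$, and I would show that the hypothesis pins down which summands survive and that each surviving summand stabilizes by Proposition \ref{K-stable} and Proposition \ref{LR-StabResult}, exactly as in the proofs of Theorem \ref{KisH} and Theorem \ref{LRisH}.

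The main obstacle I anticipate is precisely that the Heisenberg coefficient is a \emph{sum} of products rather than a single coefficient, so the clean equivalence ``multiplicity one $\Leftrightarrow$ stability'' that holds in the LR case (Proposition \ref{LR-StabResult}) does not transfer termwise. Establishing the forward direction here requires showing that $h_{n\beta,n\gamma}^{n\alpha}=1$ constrains the entire sum: one must rule out the possibility that several summands each grow while miraculously canceling, and in the Heisenberg case there is no cancellation (all coefficients are nonnegative), so multiplicity one means at most one summand in the $(\lambda,\mu,\nu)=(\alpha,\beta,\gamma)$ diagonal is nonzero and equals $1$. The delicate point is leveraging associativity of $\#$ to show that the single surviving term on the diagonal governs the behavior of \emph{all} the off-diagonal sequences $\{h_{\mu+n\beta,\nu+n\gamma}^{\lambda+n\alpha}\}$; this is where I would invoke the saturation-type argument of \cite{SamSnowden} to upgrade the pointwise multiplicity-one condition to eventual constancy of the whole family.

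Since Pelletier \cite{Pelletier2} has established this direction, I would follow the argument of \cite[Theorem 3.6]{Pelletier2}, which realizes the Heisenberg stability through the representation-stability categorical machinery: one embeds the multiplicities into a module over a twisted commutative algebra (or an $\mathrm{FI}$-type category) and shows that the hypothesis $h_{n\beta,n\gamma}^{n\alpha}=1$ guarantees that this module is Noetherian and generated in bounded degree, from which eventual constancy of each sequence follows formally. I would cite this result rather than reproduce its full categorical apparatus, noting that it is the natural common generalization of the Sam--Snowden arguments for Proposition \ref{K-stable} and Proposition \ref{LR-StabResult}.
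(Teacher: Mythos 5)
The paper gives no proof of Proposition \ref{Hsuffcient} at all---it is imported verbatim from Pelletier \cite[Theorem 3.6]{Pelletier2}---and your proposal ultimately ends in exactly the same citation, so its operative content coincides with the paper's. The preliminary machinery you sketch (twisted commutative algebras, Noetherianity, a ``saturation'' argument) is neither carried out nor needed and should not be presented as part of the proof; if you did want a self-contained argument in the spirit of this paper, the viable route is to use the monotonicity results (Propositions \ref{KMon} and \ref{LRMon}) to show that the hypothesis forces a \emph{unique} sextuple in formula \eqref{h=cg} whose five constituent coefficients all equal $1$ for every $n$, conclude from Propositions \ref{K-stable} and \ref{LR-StabResult} that each of the five triples is K-stable or LR-stable, and then run the shifting-bijection argument of Lemma \ref{LemmaforK} with the shift given by that distinguished sextuple.
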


\begin{rem}
	By Propositions \ref{K-stable} and \ref{LR-StabResult}, Proposition \ref{Hsuffcient} also shows that K-stable triples and LR-stable triples are H-stable.
\end{rem}
 
We prove the reverse direction and complete the characterization of H-stability using monotonicity of the Heisenberg coefficients. This is deduced from the monotonicity of Littlewood--Richardson coefficients and Kronecker coefficients. We start with the monotonicity of Kronecker coefficients. Stembridge \cite{JT} proved the following for Kronecker coefficients.

\begin{prop}\label{KMon}
	Let $(\alpha, \beta, \gamma)$ be a K-triple. Then
	\begin{itemize}
		\item[(1)] the sequence $\left\{g_{\mu+n\beta, \nu+n\gamma}^{\lambda+n\alpha}\right\}$ is weakly increasing for any partitions $\lambda$, $\mu$, and $\nu$ with the same size.
		\item[(2)]  if $g^\alpha_{\beta, \gamma}\geq 2$, then $g_{n\beta, n\gamma}^{n\alpha}\geq n+1$.
	\end{itemize} 
\end{prop}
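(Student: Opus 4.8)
The plan is to realize the Kronecker coefficients as the graded dimensions of a finitely generated integral domain, and then extract both parts from the single fact that multiplication by a nonzero element of a domain is injective. Fix integers $a,b$ large enough that $\ell(\mu+n\beta)\le a$ and $\ell(\nu+n\gamma)\le b$ for all $n$; this is possible because these lengths are bounded by $\max\{\ell(\mu),\ell(\beta)\}$ and $\max\{\ell(\nu),\ell(\gamma)\}$, independently of $n$. Put $W=\mathbb{C}^a\otimes\mathbb{C}^b$ with the embedding $GL_a\times GL_b\hookrightarrow GL(W)$, and let $R=\bigoplus_\lambda S^\lambda(W)$ (over partitions $\lambda$ with at most $\dim W$ parts) be the coordinate ring of the base affine space of $GL(W)$ restricted to polynomial weights. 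Since this is the ring of functions on an irreducible variety, $R$ is an integral domain.

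The classical identity $S^\lambda(\mathbb{C}^a\otimes\mathbb{C}^b)=\bigoplus_{\mu,\nu}g_{\mu,\nu}^\lambda\,S^\mu(\mathbb{C}^a)\otimes S^\nu(\mathbb{C}^b)$ is the crux: it shows that the algebra of highest weight vectors $A:=R^{U_a\times U_b}$, triply graded by $(\lambda,\mu,\nu)$, satisfies $\dim A_{(\lambda,\mu,\nu)}=g_{\mu,\nu}^\lambda$ throughout the length range fixed above. Moreover $A$ is a subring of the domain $R$, hence itself an integral domain, and multiplication respects the grading additively, $A_{(\lambda,\mu,\nu)}\cdot A_{(\lambda',\mu',\nu')}\subseteq A_{(\lambda+\lambda',\,\mu+\mu',\,\nu+\nu')}$. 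Setting up $A$ with these three properties simultaneously — correct graded dimension, domain, and grading additive under multiplication — is the one place that requires care, and I expect it to be the main obstacle; the rest is formal.

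For part (1), the K-triple hypothesis gives $g_{\beta,\gamma}^\alpha>0$, so I may pick a nonzero $v_0\in A_{(\alpha,\beta,\gamma)}$. Multiplication by $v_0$ is a linear map
\[
A_{(\lambda+n\alpha,\,\mu+n\beta,\,\nu+n\gamma)}\xrightarrow{\;\cdot v_0\;}A_{(\lambda+(n+1)\alpha,\,\mu+(n+1)\beta,\,\nu+(n+1)\gamma)},
\]
which is injective because $A$ is a domain and $v_0\ne0$. Comparing dimensions yields $g_{\mu+n\beta,\nu+n\gamma}^{\lambda+n\alpha}\le g_{\mu+(n+1)\beta,\nu+(n+1)\gamma}^{\lambda+(n+1)\alpha}$, which is exactly the asserted weak monotonicity.

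For part (2), the hypothesis $g_{\beta,\gamma}^\alpha\ge 2$ produces two linearly independent elements $v_1,v_2\in A_{(\alpha,\beta,\gamma)}$, and I claim the $n+1$ monomials $v_1^{\,i}v_2^{\,n-i}$ for $0\le i\le n$, all lying in $A_{(n\alpha,n\beta,n\gamma)}$, are linearly independent. Indeed, a relation $\sum_i c_i v_1^{\,i}v_2^{\,n-i}=0$ with some $c_i\ne 0$ can be divided by $v_1^{\,n}$ in the fraction field of the domain $A$, giving a nontrivial polynomial relation over $\mathbb{C}$ satisfied by $t=v_2/v_1$; since $\mathbb{C}$ is algebraically closed this forces $t\in\mathbb{C}$, i.e. $v_2\in\mathbb{C}v_1$, contradicting the independence of $v_1,v_2$. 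Hence $g_{n\beta,n\gamma}^{n\alpha}=\dim A_{(n\alpha,n\beta,n\gamma)}\ge n+1$, completing the proof. (An alternative, avoiding geometry, is to prove the monotonicity lemma $g_{\mu+\beta,\nu+\gamma}^{\lambda+\alpha}\ge g_{\mu,\nu}^\lambda$ directly by an explicit injection of highest weight vectors in the internal product of symmetric functions, but the domain framework delivers both parts uniformly.)
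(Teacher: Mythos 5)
Your argument is correct, but it is worth noting that the paper does not actually prove this proposition: it is quoted from Stembridge~\cite{JT}, so the only in-paper proof to compare against is the hive-theoretic proof of the Littlewood--Richardson analogue (Proposition~\ref{LRMon}), where part (1) comes from adding a fixed hive $\Delta\in H_k(\alpha,\beta,\gamma)$ to every hive in $H_k(\lambda,\mu,\nu)$ and part (2) from the $n+1$ convex combinations $i\Delta_1+(n-i)\Delta_2$. Your route is the algebro-geometric counterpart of that polytopal argument: you replace ``combinatorial objects that add'' by a multigraded integral domain $A$ whose graded dimensions are the Kronecker coefficients, so that part (1) is injectivity of multiplication by a nonzero $v_0\in A_{(\alpha,\beta,\gamma)}$ and part (2) is the linear independence of $v_1^iv_2^{n-i}$ (your fraction-field argument for the latter is correct, since an element of a field extension of $\mathbb{C}$ that is algebraic over $\mathbb{C}$ lies in $\mathbb{C}$). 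The one step you rightly flag as delicate --- building $R=\bigoplus_\lambda S^\lambda(W)$ as a domain with Cartan multiplication $S^\lambda\cdot S^{\lambda'}\subseteq S^{\lambda+\lambda'}$ --- can be sidestepped entirely: take instead the polynomial ring $B=\mathrm{Sym}(\mathbb{C}^a\otimes\mathbb{C}^b\otimes\mathbb{C}^c)$, which is manifestly a domain with additive weight grading, and set $A=B^{U_a\times U_b\times U_c}$; two applications of the Cauchy identity give $\dim A_{(\lambda,\mu,\nu)}=g_{\mu,\nu}^{\lambda}$ in the appropriate length range, and the rest of your argument goes through verbatim. What your approach buys over the hive-style one is uniformity (both parts follow from the single fact that $A$ is a domain) and the absence of any combinatorial model for $g_{\mu,\nu}^{\lambda}$, which is essential here since no hive-like model for Kronecker coefficients is known; what the paper's polytopal method buys in the LR case is complete elementarity and explicit bijections. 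Your minor imprecisions (the polynomial-weight subring of $\mathbb{C}[G/U]$ is a subring of a domain rather than itself a coordinate ring of the base affine space, and one should note that $g_{\mu,\nu}^{\lambda}=0$ forces agreement with $\dim A$ when $\ell(\lambda+n\alpha)>ab$) do not affect correctness.
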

\noindent Using the hive model of Littlewood-Richardson coefficients (see \cite{KnutsonTao, CombinLR}), we prove an analogous result for Littlewood-Richardson coefficients.

\begin{prop}\label{LRMon}
		Let $(\alpha, \beta, \gamma)$ be an LR-triple. Then
	\begin{itemize}
		\item[(1)] the sequence $\left\{c_{\mu+n\beta, \nu+n\gamma}^{\lambda+n\alpha}\right\}$ is weakly increasing for any partitions $\lambda$, $\mu$, and $\nu$ with $|\lambda|=|\mu|+|\nu|$.
		\item[(2)]  if $c^\alpha_{\beta, \gamma}\geq 2$, then $c_{n\beta, n\gamma}^{n\alpha}\geq n+1$.
	\end{itemize} 
\end{prop}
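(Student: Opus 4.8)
The plan is to realize every Littlewood--Richardson coefficient in the statement as the number of integer points of a hive polytope, and then to exploit the fact that the defining inequalities of a hive are homogeneous and linear while the boundary data depend additively on the partitions. Concretely, I would first fix an integer $k$ large enough to bound the number of parts of all partitions that occur in either sequence; this is possible because $\ell(\lambda+n\alpha)\leq\max\{\ell(\lambda),\ell(\alpha)\}$ and similarly for the other entries, so the lengths are bounded uniformly in $n$. Recall that $c_{\sigma,\tau}^{\pi}$ equals the number of integer hives of size $k$ whose three boundary strings are the partial sums determined by $\pi$, $\sigma$, and $\tau$, the interior entries being constrained only by the rhombus inequalities. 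The single observation that drives everything is that if $H$ and $H'$ are hives, then their entrywise sum $H+H'$ again satisfies every rhombus inequality (each such inequality is preserved under addition), and its boundary strings are exactly the partial sums of $\pi+\pi'$, $\sigma+\sigma'$, $\tau+\tau'$; thus adding hives corresponds to adding the boundary partitions.

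For part (1), since $(\alpha,\beta,\gamma)$ is an LR-triple we have $c_{\beta,\gamma}^{\alpha}>0$, so I would fix one integer hive $H_0$ with boundary $(\alpha,\beta,\gamma)$. Given any integer hive $H$ with boundary $(\lambda+n\alpha,\mu+n\beta,\nu+n\gamma)$, the sum $H+H_0$ is an integer hive with boundary $(\lambda+(n+1)\alpha,\mu+(n+1)\beta,\nu+(n+1)\gamma)$. The assignment $H\mapsto H+H_0$ is injective, being a translation by the fixed array $H_0$, so it embeds the set counted by $c_{\mu+n\beta,\nu+n\gamma}^{\lambda+n\alpha}$ into the set counted by $c_{\mu+(n+1)\beta,\nu+(n+1)\gamma}^{\lambda+(n+1)\alpha}$, yielding the claimed weak monotonicity.

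For part (2), the hypothesis $c_{\beta,\gamma}^{\alpha}\geq 2$ supplies two distinct integer hives $H_1\neq H_2$, both with boundary $(\alpha,\beta,\gamma)$. For $0\leq i\leq n$ I would set $G_i=iH_1+(n-i)H_2$. Each $G_i$ is an integer hive, and since both summands have boundary $(\alpha,\beta,\gamma)$ its boundary is $n$ times that boundary, namely $(n\alpha,n\beta,n\gamma)$. The arrays $G_0,\dots,G_n$ are pairwise distinct, for $G_i=G_j$ would force $(i-j)(H_1-H_2)=0$ and hence $i=j$. These are therefore $n+1$ distinct integer hives counted by $c_{n\beta,n\gamma}^{n\alpha}$, giving $c_{n\beta,n\gamma}^{n\alpha}\geq n+1$.

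The hard part is purely the bookkeeping inside the hive formalism: one must verify carefully that the rhombus inequalities are genuinely closed under entrywise addition and nonnegative scaling, that the three boundary strings depend additively on the partition data, and that a single size $k$ houses every partition in each sequence. Once this linear structure is in place both parts are immediate; part (2) is in essence the remark that a lattice polytope with two distinct lattice points has, in its $n$-th dilate, at least $n+1$ lattice points along the segment joining the two scaled copies.
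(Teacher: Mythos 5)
Your proof is correct and follows essentially the same route as the paper: both arguments fix a hive with boundary $(\alpha,\beta,\gamma)$ and use entrywise addition of hives to inject $H_k(\lambda,\mu,\nu)$ into $H_k(\lambda+\alpha,\mu+\beta,\nu+\gamma)$ for part (1), and both produce the $n+1$ distinct hives $iH_1+(n-i)H_2$ for part (2). Your write-up is in fact slightly more careful than the paper's, since you explicitly verify the injectivity and the pairwise distinctness of the $G_i$.
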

\begin{proof}
	We follow the notation used for hives in \cite[Section 4]{CombinLR}. Let $k$ be a positive integer larger than the lengths of $\lambda$, $\mu$, $\nu$, $\alpha$, $\beta$, and $\gamma$. We define (coordinatewise) addition and scalar multiplication on hives (as what we do for vectors and matrices).
	
	For (1), it suffices to show $c_{\mu,\nu}^\lambda\leq c_{\mu+\beta, \nu+\gamma}^{\lambda+\alpha}$. Since $c_{\beta, \gamma}^\alpha\geq 1$, there exists a hive $\Delta \in H_k(\alpha, \beta, \gamma)$. Then the map: $\iota: H_k(\lambda, \mu, \nu)\hookrightarrow H_k(\lambda+\alpha, \mu+\beta, \nu+\gamma)$
	$$\iota(\Theta)=\Theta+\Delta$$
	where $\Theta\in H_k(\lambda, \mu, \nu)$, gives a well-defined injection. So (1) is proved.
	
	For (2), we have two different hives $\Delta_1$ and $\Delta_2$ in $H_k(\alpha, \beta, \gamma)$ as $c_{\beta, \gamma}^\alpha\geq 2$. Then $i\Delta_1+(n-i)\Delta_2$ ($0\leq i\leq n$) give $n+1$ different hives in $H_k(n\alpha, n\beta, n\gamma)$, so $c_{n\beta, n\gamma}^{n\alpha}\geq n+1$.
\end{proof}
Propositions \ref{formulaheisenberg}, \ref{KMon}, and \ref{LRMon} together imply the following:
\begin{prop}\label{HMon}
		Let $(\alpha, \beta, \gamma)$ be a H-triple. Then
		\begin{itemize}
			\item[(1)] the sequence $\left\{h_{\mu+n\beta, \nu+n\gamma}^{\lambda+n\alpha}\right\}$ is weakly increasing for any partitions $\lambda$, $\mu$, and $\nu$ with $\max\{|\mu|, |\nu|\}\leq |\lambda|\leq |\mu|+|\nu|$.
			\item[(2)]  if $h^\alpha_{\beta, \gamma}\geq 2$, then $h_{n\beta, n\gamma}^{n\alpha}\geq n+1$.
		\end{itemize} 
\end{prop}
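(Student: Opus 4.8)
The plan is to expand every Heisenberg coefficient appearing in the statement by the formula \eqref{h=cg} of Proposition \ref{formulaheisenberg} and then to control the resulting sums of products of Littlewood--Richardson and Kronecker coefficients using the term-wise monotonicity of Propositions \ref{LRMon} and \ref{KMon}. Throughout I would use the renamed summation variables from the proof of Theorem \ref{KisH}, so that for admissible $\lambda,\mu,\nu$ one has $h_{\mu,\nu}^{\lambda}=\sum c_{\xi,\theta}^{\mu}\,c_{\eta,\rho}^{\nu}\,g_{\theta,\eta}^{\delta}\,c_{\xi,\delta}^{\tau}\,c_{\tau,\rho}^{\lambda}$, the sum ranging over tuples $(\xi,\theta,\eta,\rho,\delta,\tau)$ of partitions of the prescribed sizes. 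For part (1), since adding $(\alpha,\beta,\gamma)$ to $(\lambda,\mu,\nu)$ preserves the inequalities $\max\{|\mu|,|\nu|\}\le|\lambda|\le|\mu|+|\nu|$ (because $\max\{|\beta|,|\gamma|\}\le|\alpha|\le|\beta|+|\gamma|$), it suffices to prove the single step $h_{\mu,\nu}^{\lambda}\le h_{\mu+\beta,\nu+\gamma}^{\lambda+\alpha}$ for all admissible triples and then chain the inequalities. As $(\alpha,\beta,\gamma)$ is an H-triple, $h_{\beta,\gamma}^{\alpha}>0$, so \eqref{h=cg} has a non-vanishing summand, giving a fixed witness $u_{0}=(\xi_{0},\theta_{0},\eta_{0},\rho_{0},\delta_{0},\tau_{0})$ whose five factors $c_{\xi_{0},\theta_{0}}^{\beta}$, $c_{\eta_{0},\rho_{0}}^{\gamma}$, $g_{\theta_{0},\eta_{0}}^{\delta_{0}}$, $c_{\xi_{0},\delta_{0}}^{\tau_{0}}$, $c_{\tau_{0},\rho_{0}}^{\alpha}$ are all positive.

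I would then use the coordinate-wise translation $u\mapsto u+u_{0}$ from the index set of $h_{\mu,\nu}^{\lambda}$ to that of $h_{\mu+\beta,\nu+\gamma}^{\lambda+\alpha}$; a size check (the parameters $p,q,r$ of the two Heisenberg products simply add) places the image in the correct index set, and the map is clearly injective. Comparing a summand with its image factor by factor, each positive factor of $u_{0}$ supplies an LR- or K-triple whose part (1) monotonicity gives exactly the required inequality: $c_{\xi,\theta}^{\mu}\le c_{\xi+\xi_{0},\theta+\theta_{0}}^{\mu+\beta}$ via the LR-triple $(\beta,\xi_{0},\theta_{0})$, $g_{\theta,\eta}^{\delta}\le g_{\theta+\theta_{0},\eta+\eta_{0}}^{\delta+\delta_{0}}$ via the K-triple $(\delta_{0},\theta_{0},\eta_{0})$, and analogously for $c_{\eta,\rho}^{\nu}$, $c_{\xi,\delta}^{\tau}$, and $c_{\tau,\rho}^{\lambda}$ through $(\gamma,\eta_{0},\rho_{0})$, $(\tau_{0},\xi_{0},\delta_{0})$, $(\alpha,\tau_{0},\rho_{0})$. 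Multiplying these five inequalities (all quantities non-negative) shows the summand at $u$ is at most the summand at $u+u_{0}$; summing over the injective image and discarding the remaining non-negative terms yields $h_{\mu,\nu}^{\lambda}\le h_{\mu+\beta,\nu+\gamma}^{\lambda+\alpha}$, proving (1).

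For part (2), I would observe that $h_{\beta,\gamma}^{\alpha}\ge 2$ forces one of two cases in the sum \eqref{h=cg}: either (A) some single summand is $\ge 2$, or (B) at least two distinct tuples $u_{1}\ne u_{2}$ give non-vanishing summands. In case (A) one of the five factors of that tuple is $\ge 2$; scaling the whole tuple by $n$ and applying the relevant part (2) of Proposition \ref{LRMon} or \ref{KMon} to that factor, together with part (1) to keep the other four factors positive, makes the single summand of $h_{n\beta,n\gamma}^{n\alpha}$ at $n\cdot u$ at least $n+1$, whence $h_{n\beta,n\gamma}^{n\alpha}\ge n+1$. In case (B), for $0\le i\le n$ I would form the tuples $i\,u_{1}+(n-i)\,u_{2}$; these are $n+1$ \emph{distinct} index tuples for $h_{n\beta,n\gamma}^{n\alpha}$ (they differ in whatever coordinate distinguishes $u_{1}$ from $u_{2}$), and each of their five factors is positive by the additivity of the supports of the Littlewood--Richardson and Kronecker coefficients, which I would derive directly from part (1) monotonicity. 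Each such tuple then contributes at least $1$, so again $h_{n\beta,n\gamma}^{n\alpha}\ge n+1$.

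I expect the delicate points to be bookkeeping rather than conceptual: verifying that the translated and scaled tuples carry exactly the sizes demanded by the index set of \eqref{h=cg} (which reduces to the additivity of $p=l-n$, $q=m+n-l$, $r=l-m$ under adding the two triples), and, in case (B), confirming simultaneously the distinctness of the $n+1$ combined tuples and the positivity of every factor along each of them. This positivity is exactly the statement that the supports of $c_{\,\cdot\,,\,\cdot}^{\,\cdot}$ and $g_{\,\cdot\,,\,\cdot}^{\,\cdot}$ are closed under addition, and I would prove it by one or two applications of part (1) of Propositions \ref{LRMon} and \ref{KMon} rather than invoking it as an external semigroup result.
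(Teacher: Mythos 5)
Your proposal is correct and follows essentially the same route as the paper: expand via Proposition \ref{formulaheisenberg}, translate the index set by a positive witness sextuple and compare summands factor-by-factor using Propositions \ref{KMon} and \ref{LRMon} for part (1), and split part (2) into the same two cases (one summand $\geq 2$ versus two distinct positive sextuples, combined as $iu_1+(n-i)u_2$). The only difference is cosmetic: you correctly take $0\leq i\leq n$ where the paper writes $1\leq i\leq n$ (an off-by-one slip there), and you spell out the size bookkeeping and the semigroup property of the supports that the paper leaves implicit.
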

\begin{proof}
For (1), it is enough to show $h_{\mu, \nu}^\lambda\leq h_{\mu+\beta, \nu+\gamma}^{\lambda+\alpha}$. Since $h_{\beta, \gamma}^\alpha>0$, by Formula \eqref{h=cg}, there exists a sextuple $(\xi, \theta, \eta, \rho, \delta, \tau)$ of partitions with appropriate sizes such that $c_{\xi, \theta }^{\beta}\,\, c_{\eta, \rho}^{\gamma}\,\, g_{\theta, \eta}^\delta\,\, c_{\xi, \delta}^\tau\,\, c_{\tau, \rho}^{\alpha}>0$. The triples appearing in the coefficients on the left hand side are LR-triples or K-triples. As in the proof of Theorems \ref{KisH} and \ref{LRisH}, applying Proposition \ref{formulaheisenberg}, we write 
$$h_{\mu, \nu}^\lambda=\sum\limits_{u\in\Lambda} f_u \quad\text{and}\quad h_{\mu+\beta, \nu+\gamma}^{\lambda+\alpha}=\sum\limits_{u'\in\Lambda'} f'_{u'},$$ where $\Lambda$ and $\Lambda'$ are sets of sextuples of partitions with appropriate sizes, $f_u$ and $f'_{u'}$ are the summands given by the sextuples $u$ and $u'$. We view the sextuples as vectors whose coordinates are partitions, so we may define addition and scalar multiplication for them. The map $u\longrightarrow u+(\xi, \theta, \eta, \rho, \delta, \tau)=:u'$ embeds $\Lambda$ into $\Lambda'$. From Proposition \ref{KMon} and Proposition \ref{LRMon}, we know that $f_u\leq f'_{u'}$, so (1) is proved. 

For (2), if $h^\alpha_{\beta, \gamma}\geq 2$, then there are two possibilities.

\noindent Case 1. There exists a sextuple $(\xi, \theta, \eta, \rho, \delta, \tau)$ of partitions with appropriate sizes such that $c_{\xi, \theta }^{\beta}\,\, c_{\eta, \rho}^{\gamma}\,\, g_{\theta, \eta}^\delta\,\, c_{\xi, \delta}^\tau\,\, c_{\tau, \rho}^{\alpha}\geq 2.$ So all the five coefficients on the left hand side are positive and at least one of them is at least 2. From Propositions \ref{KMon} and \ref{LRMon}, we have 
$$h_{n\beta, n\gamma}^{n\alpha}\geq c_{n\xi, n\theta }^{n\beta}\,\, c_{n\eta, n\rho}^{n\gamma}\,\, g_{n\theta, n\eta}^{n\delta}\,\, c_{n\xi, n\delta}^{n\tau}\,\, c_{n\tau, n\rho}^{n\alpha}\geq n+1.$$

\noindent Case 2. Two distinct sextuples $u=(\xi, \theta, \eta, \rho, \delta, \tau)$ and $u'=(\xi', \theta', \eta', \rho', \delta', \tau')$ give positive summands for $h_{\beta, \gamma}^{\alpha}$. Then $iu+(n-i)u'$ ($1\leq i\leq n$) gives $n+1$ different sextuples, and due to Propositions \ref{KMon} and \ref{LRMon}, they all give positive summands for $h_{n\beta, n\gamma}^{n\alpha}$, so $h_{n\beta, n\gamma}^{n\alpha}\geq n+1$. Hence, we prove the proposition.
\end{proof}
\noindent Combining Proposition \ref{Hsuffcient} and \ref{HMon}, we achieve the main theorem of this paper.
\begin{thm}
	An H-triple $(\alpha, \beta, \gamma)$ is H-stable if and only if $h_{n\beta, n\gamma}^{n\alpha}=1$ for all $n>0$.
\end{thm}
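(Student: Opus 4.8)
The plan is to prove the final theorem by combining the two directions already assembled in the excerpt. The statement is an ``if and only if'', and one direction is immediate: if $h_{n\beta,n\gamma}^{n\alpha}=1$ for all $n>0$, then Proposition~\ref{Hsuffcient} gives that $(\alpha,\beta,\gamma)$ is H-stable. So the real content is the reverse implication, which I would obtain by contraposition using the monotonicity established in Proposition~\ref{HMon}. Thus the argument splits cleanly into the two halves, and essentially all the work has been front-loaded into the preceding propositions.

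For the forward direction, suppose $(\alpha,\beta,\gamma)$ is an H-triple that is H-stable; I want to show $h_{n\beta,n\gamma}^{n\alpha}=1$ for all $n>0$. First I would observe that since $h_{\beta,\gamma}^{\alpha}>0$, part~(1) of Proposition~\ref{HMon} applied to the triple $(\lambda,\mu,\nu)=(\alpha,\beta,\gamma)$ shows that the diagonal sequence $\bigl\{h_{n\beta,n\gamma}^{n\alpha}\bigr\}$ is weakly increasing in $n$ (this is the specialization of the weakly increasing sequence to $\lambda=\alpha$, $\mu=\beta$, $\nu=\gamma$). By H-stability this sequence is also eventually constant, so a weakly increasing eventually constant integer sequence bounded below by $h_{\beta,\gamma}^{\alpha}\ge 1$ must in fact be constant once it stabilizes. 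The key leverage is part~(2) of Proposition~\ref{HMon}: if $h_{\beta,\gamma}^{\alpha}\ge 2$ then $h_{n\beta,n\gamma}^{n\alpha}\ge n+1$, which is \emph{strictly} unbounded and hence cannot be eventually constant, contradicting H-stability. Therefore H-stability forces $h_{\beta,\gamma}^{\alpha}=1$.

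It remains to upgrade $h_{\beta,\gamma}^{\alpha}=1$ to $h_{n\beta,n\gamma}^{n\alpha}=1$ for \emph{all} $n$. Here I would note that the triple $(n\alpha,n\beta,n\gamma)$ is again an H-triple (its Heisenberg coefficient is positive since $h_{\beta,\gamma}^{\alpha}=1>0$ and the coefficients are weakly increasing along the diagonal), and moreover any stable triple scales: if $(\alpha,\beta,\gamma)$ is H-stable then so is $(n\alpha,n\beta,n\gamma)$, because stabilizing $\bigl\{h_{\mu+kn\beta,\nu+kn\gamma}^{\lambda+kn\alpha}\bigr\}$ is a subsequence condition of stabilizing $\bigl\{h_{\mu+m\beta,\nu+m\gamma}^{\lambda+m\alpha}\bigr\}$. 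Applying the argument of the previous paragraph to $(n\alpha,n\beta,n\gamma)$ then yields $h_{n\beta,n\gamma}^{n\alpha}=1$ for every $n>0$, completing the proof.

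The step I expect to be the main (though modest) obstacle is the clean extraction of ``weakly increasing plus eventually constant plus the growth bound in Proposition~\ref{HMon}(2) forces value~$1$''. Concretely, one must be careful that Proposition~\ref{HMon}(2) is stated for $h_{\beta,\gamma}^{\alpha}\ge 2$ rather than directly for the diagonal of $(n\alpha,n\beta,n\gamma)$, so the scaling remark in the last paragraph is what makes the induction on $n$ go through uniformly. All the genuinely difficult analytic and combinatorial content — the monotonicity of Littlewood--Richardson and Kronecker coefficients via hives, and the sufficiency direction of Pelletier — has already been quoted, so this final theorem is a short logical assembly rather than a new computation.
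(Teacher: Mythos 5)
Your proof is correct and takes essentially the same route as the paper, which deduces the theorem in one line by combining Proposition~\ref{Hsuffcient} (the ``if'' direction) with Proposition~\ref{HMon} (whose part (2) rules out $h_{n_0\beta,n_0\gamma}^{n_0\alpha}\ge 2$ for any $n_0$, via the unbounded growth of the subsequence coming from the scaled triple). Your explicit treatment of the scaling to $(n\alpha,n\beta,n\gamma)$ merely fills in details the paper leaves implicit.
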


\section{Additive Matrices}
\label{SecAdditive}
Manivel \cite{ManKronCone} and Vallejo \cite{VallejoStab} used additive matrices to produce examples of K-stable triples. We first recall some definitions and results concerning additive matrices, then we give an analogous result for H-stable triples.

For positive integer $n$, let $[n]:=\{1,2,\ldots, n\}$. Given a matrix $A$, we arrange its entries in weakly decreasing order. The result sequence is called the $\pi$-sequence of $A$, and denoted by $\pi(A)$. Let $\mathcal{M}(\alpha, \beta)$ denote the set of matrices with nonnegative integer entries, row-sum vector $\alpha$ and column-sum vector $\beta$, and $\mathcal{M}(\alpha, \beta)_\gamma$ be the subset of $\mathcal{M}(\alpha, \beta)$ whose elements have $\pi$-sequence $\gamma$.

\begin{defn}
A $p\times q$ matrix $A=(a_{i,j})$ with nonnegative integer entries is called \textit{K-additive} if there exist real numbers $x_1, x_2, \ldots, x_p, y_1, y_2, \ldots, y_q$, such that
$$a_{i,j}>a_{k,l}\Longrightarrow x_i+y_j>x_k+y_l$$
for all $i,k\in [p]$ and $j,l\in[q].$
\end{defn}
\begin{prop}[\cite{VallejoStab} Theorem 1.1]\label{AddisK}
	Let $\alpha, \beta$, and $\gamma$ be partitions of the same size. If there is a matrix $A\in \mathcal{M}(\beta, \gamma)_\alpha$ which is K-additive, then $(\alpha, \beta, \gamma)$ is K-stable.
\end{prop}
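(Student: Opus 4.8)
The final statement to prove is Proposition~\ref{AddisK}: if $A \in \mathcal{M}(\beta, \gamma)_\alpha$ is K-additive, then $(\alpha, \beta, \gamma)$ is K-stable. My plan is to reduce K-stability to the clean numerical criterion already available, namely Proposition~\ref{K-stable}, which says a K-triple $(\alpha, \beta, \gamma)$ is K-stable if and only if $g_{n\beta, n\gamma}^{n\alpha} = 1$ for all $n > 0$. So the entire task splits into two checks: first, that $(\alpha, \beta, \gamma)$ is a genuine K-triple (i.e.\ $\alpha, \beta, \gamma$ have the same size and $g_{\beta, \gamma}^\alpha > 0$), and second, that $g_{n\beta, n\gamma}^{n\alpha} = 1$ for every positive $n$. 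The equal-size condition is immediate from $A \in \mathcal{M}(\beta, \gamma)_\alpha$: the row sums give $\beta$, the column sums give $\gamma$, and both equal the total sum of entries of $A$, which is also $|\alpha|$ since $\alpha$ is the $\pi$-sequence (a rearrangement of all entries).

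The heart of the matter is computing $g_{n\beta, n\gamma}^{n\alpha}$. The plan is to connect Kronecker coefficients of this special shape to counts of nonnegative integer matrices with prescribed margins, via the standard interpretation of $g_{\mu,\nu}^{(\text{one part being a rearrangement})}$ — more precisely, to use the fact that when one of the three partitions is built from the entries of a contingency table, the Kronecker coefficient admits a combinatorial lower/upper bound in terms of $|\mathcal{M}(\beta,\gamma)_\alpha|$ or a closely related additive-matrix count. I would invoke the characterization (due to Vallejo, underlying this theorem) that $g_{\beta,\gamma}^\alpha$ is bounded below by the number of additive matrices in $\mathcal{M}(\beta,\gamma)_\alpha$ up to the relevant equivalence, and that when an additive matrix exists, the minimal-length/extremal contribution is exactly $1$. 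The key structural input is that K-additivity is preserved under scaling: if $A$ witnesses additivity with real parameters $x_i, y_j$, then the dilated matrix $nA \in \mathcal{M}(n\beta, n\gamma)_{n\alpha}$ is K-additive with the \emph{same} parameters $x_i, y_j$, since $a_{i,j} > a_{k,l}$ iff $n a_{i,j} > n a_{k,l}$. Thus the additivity hypothesis propagates uniformly to every dilate $n$.

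The technical core — and the step I expect to be the main obstacle — is proving that additivity forces the count $g_{n\beta, n\gamma}^{n\alpha}$ to equal exactly $1$ rather than merely being positive. Positivity is easy: the matrix $nA$ exhibits one contributing term. The hard direction is the \emph{uniqueness}: I would argue that the additive ordering induced by $x_i + y_j$ pins down a unique matrix in $\mathcal{M}(n\beta, n\gamma)_{n\alpha}$ achieving the extremal configuration, because the additive inequalities force any competing matrix with the same margins and the same $\pi$-sequence to coincide with $nA$ entrywise. Concretely, the additivity condition means the relative order of entries is rigidly determined by the separable function $(i,j) \mapsto x_i + y_j$, and together with fixed row sums $n\beta$, column sums $n\gamma$, and multiset of entries $n\alpha$, this rigidity should leave no freedom. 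I would make this precise by a greedy/exchange argument: suppose two distinct matrices had these margins and $\pi$-sequence; locate a $2\times 2$ submatrix where they differ, and use the strict additive inequality to derive a contradiction with the prescribed orderings. Once $g_{n\beta,n\gamma}^{n\alpha}=1$ is established for all $n>0$, Proposition~\ref{K-stable} delivers K-stability immediately.

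One subtlety I would be careful about is the exact form of the combinatorial interpretation of $g_{n\beta,n\gamma}^{n\alpha}$ being used: the cleanest route is to cite Vallejo's matrix framework directly and reduce to showing the additive matrix is the unique element in its margin-and-$\pi$-sequence class, but if a self-contained argument is wanted, I would instead verify the uniqueness of the additive contribution combinatorially and then appeal to the known inequality $1 \le g_{n\beta,n\gamma}^{n\alpha} \le (\text{number of such additive matrices}) = 1$. Either way, the logical skeleton is: (i) confirm $(\alpha,\beta,\gamma)$ is a K-triple; (ii) observe additivity is scale-invariant so every $nA$ is K-additive in $\mathcal{M}(n\beta,n\gamma)_{n\alpha}$; (iii) show this forces $g_{n\beta,n\gamma}^{n\alpha}=1$ for all $n$; (iv) apply Proposition~\ref{K-stable}.
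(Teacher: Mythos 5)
Note first that the paper does not prove Proposition~\ref{AddisK} at all: it is quoted from Vallejo \cite{VallejoStab}. The closest thing to a ``paper's own proof'' is the proof of the Heisenberg analogue, Theorem~\ref{AddisH}, whose skeleton is exactly yours: scaling preserves additivity, so $nA$ is additive for all $n$; an additive matrix forces the corresponding coefficient to be $1$ (Lemma~\ref{Hadd=1}, resp.\ Lemma~\ref{add=1} in the Kronecker case); then the numerical criterion (Proposition~\ref{Hsuffcient}, resp.\ Proposition~\ref{K-stable}) gives stability. So your steps (i), (ii), (iv) are fine and match the paper's strategy, and if in step (iii) you simply cite Vallejo's result that a K-additive matrix in $\mathcal{M}(\beta,\gamma)_\alpha$ gives $g_{\beta,\gamma}^\alpha=1$ (the paper's Lemma~\ref{add=1}, applied to $nA$), the proof is complete. (Vallejo's own proof of the proposition is direct and does not pass through the Stembridge--Sam--Snowden criterion, so your route is in that sense a genuinely different and shorter reduction, made possible by Proposition~\ref{K-stable}.)

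The gap is in your self-contained version of step (iii). You assert that positivity is ``easy'' because ``the matrix $nA$ exhibits one contributing term'': Kronecker coefficients have no known positive combinatorial formula, so a matrix in $\mathcal{M}(n\beta,n\gamma)_{n\alpha}$ does not by itself exhibit a copy of $V_{n\alpha}$ in the Kronecker product; positivity for additive (equivalently, minimal) matrices is a genuine theorem requiring the expansion of $\mathfrak{h}_\beta\otimes\mathfrak{h}_\gamma$ (Proposition~\ref{Kronofh}) together with the unitriangularity of the Kostka matrix in \eqref{htos}. Likewise, your proposed exchange argument would at best show that $A$ is the unique element of $\mathcal{M}(n\beta,n\gamma)_{n\alpha}$; to conclude $g_{n\beta,n\gamma}^{n\alpha}\le 1$ you also need that no matrix in $\mathcal{M}(n\beta,n\gamma)$ has $\pi$-sequence strictly dominated by $n\alpha$ (this is where minimality, i.e.\ the permutohedron characterization as in Corollary~\ref{intersectionunique}, enters), and then the bound
$g_{n\beta,n\gamma}^{n\alpha}\le\sum_{\delta\preccurlyeq n\alpha}|\mathcal{M}(n\beta,n\gamma)_\delta|\,K_{n\alpha,\delta}$.
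None of this is fatal---it is precisely the content of the cited Lemma~\ref{add=1}---but as written your uniqueness/positivity argument does not stand on its own.
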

\noindent Moreover, Manivel \cite[Section 5.3]{ManKronCone} showed that each K-additive matrix defines a regular face of the corresponding Kronecker polyhedron, of minimal dimension. 

For any (weak) composition $\alpha=(\alpha_1, \alpha_2, \ldots, \alpha_r)$ of $n$ (written as $\alpha\vDash n$), let $\mathfrak{h}_\alpha$ be the induced representation from the trivial representation of the Young subgroup $S_\alpha=S_{\alpha_1}\times S_{\alpha_2}\times \dotsb \times S_{\alpha_r}$ to $S_n$. \noindent For partitions $\lambda$ and $\mu$ with the same size, the multiplicity of $V_\lambda$ in the irreducible decomposition of $\mathfrak{h}_\mu$ is $K_{\lambda, \mu}$, the Kostka number. This counts the number of semistandard Young tableaux of shape $\lambda$ and content $\mu$. It is well-known that $K_{\lambda, \mu}>0$ if and only if $\lambda\succcurlyeq \mu$. In particular, $K_{\lambda, \lambda}=1$. So
\begin{equation}\label{htos}
\mathfrak{h}_\mu=\bigoplus\limits_{\lambda\succcurlyeq \mu} K_{\lambda, \mu}V_\lambda=V_\mu\oplus\left(\bigoplus\limits_{\lambda\succ \mu} K_{\lambda, \mu}V_\lambda\right).
\end{equation}
For a (weak) composition $\alpha$ of $|\lambda|$, $\mathfrak{h}_\alpha$ is isomorphic to $\mathfrak{h}_{\pi(\alpha)}$ as representations of $S_{|\lambda|}$, so we define $K_{\lambda, \alpha}=K_{\lambda, \pi(\alpha)}$. One of the most important steps in Vallejo's proof of Proposition \ref{AddisK} is the following formula, which computes the Kronecker product of two $\mathfrak{h}$'s.
\begin{prop}\label{Kronofh}
	Let $\beta$ and $\gamma$ be (weak) compositions of $n$, then the Kronecker product of $\mathfrak{h}_\beta$ and $\mathfrak{h}_\gamma$ is
	$$\text{Res}_{S_n}^{S_n\times S_n}(\mathfrak{h}_\beta\otimes\mathfrak{h}_\gamma)=\bigoplus\limits_{A\in \mathcal{M}(\beta, \gamma)} \mathfrak{h}_{\pi(A)}=\bigoplus\limits_{\alpha\vdash n}\bigoplus\limits_{A\in \mathcal{M}(\beta, \gamma)_{\alpha}} \mathfrak{h}_{\alpha}.$$
\end{prop}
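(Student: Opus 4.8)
The plan is to realize both factors as honest permutation modules and then decompose the Kronecker product combinatorially. Recall that $\mathfrak{h}_\beta=\mathrm{Ind}_{S_\beta}^{S_n}(\mathbf 1)$ is the permutation module $\mathbb{C}[X_\beta]$, where $X_\beta$ is the set of ordered set partitions $(B_1,\dots,B_p)$ of $[n]$ with $|B_i|=\beta_i$ and $S_n$ acts by permuting the ground set $[n]$; likewise $\mathfrak{h}_\gamma=\mathbb{C}[X_\gamma]$, with $X_\gamma$ the ordered set partitions into blocks of sizes $\gamma_1,\dots,\gamma_q$. The Kronecker product restricts the outer tensor product to the diagonal copy of $S_n$, and the tensor product of two permutation modules under the diagonal action is again a permutation module, namely $\mathbb{C}[X_\beta\times X_\gamma]$ with $S_n$ acting diagonally. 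So the problem reduces to decomposing $\mathbb{C}[X_\beta\times X_\gamma]$ into its $S_n$-orbit summands.

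Next I would attach to each pair $\big((B_1,\dots,B_p),(C_1,\dots,C_q)\big)\in X_\beta\times X_\gamma$ its intersection matrix $A=(a_{i,j})$ with $a_{i,j}=|B_i\cap C_j|$. Since $\sum_j a_{i,j}=|B_i|=\beta_i$ and $\sum_i a_{i,j}=|C_j|=\gamma_j$, this matrix lies in $\mathcal{M}(\beta,\gamma)$, and it is clearly constant on $S_n$-orbits. The core step is the converse: two pairs lie in the same $S_n$-orbit if and only if they have the same intersection matrix. One direction is immediate; for the other I would show that $S_n$ acts transitively on the set of pairs with a fixed matrix $A$, by constructing a permutation of $[n]$ carrying one configuration to another intersection block by intersection block (equivalently, this is the classical bijection between $S_n$-orbits on $X_\beta\times X_\gamma$, the double cosets $S_\beta\backslash S_n/S_\gamma$, and $\mathcal{M}(\beta,\gamma)$).

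Finally I would compute the stabilizer of a pair whose intersection matrix is $A$. Such a pair refines $[n]$ into the $pq$ disjoint blocks $B_i\cap C_j$ of sizes $a_{i,j}$, and a permutation fixes the pair precisely when it preserves each of these blocks setwise; hence the stabilizer is the Young subgroup $\prod_{i,j}S_{a_{i,j}}$. The corresponding orbit summand is therefore $\mathrm{Ind}_{\prod_{i,j}S_{a_{i,j}}}^{S_n}(\mathbf 1)=\mathfrak{h}_{\pi(A)}$, using that this induced module depends only on the multiset of the parts $a_{i,j}$, i.e.\ on $\pi(A)$, together with the convention $\mathfrak{h}_\alpha=\mathfrak{h}_{\pi(\alpha)}$. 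Summing over orbits gives $\bigoplus_{A\in\mathcal{M}(\beta,\gamma)}\mathfrak{h}_{\pi(A)}$, and collecting the matrices according to their common $\pi$-sequence $\alpha$ yields the second expression $\bigoplus_{\alpha\vdash n}\bigoplus_{A\in\mathcal{M}(\beta,\gamma)_\alpha}\mathfrak{h}_\alpha$. The main obstacle is the transitivity claim in the second paragraph; the remaining identifications are routine bookkeeping.
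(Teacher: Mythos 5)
Your argument is correct and complete: realizing $\mathfrak{h}_\beta$ and $\mathfrak{h}_\gamma$ as permutation modules on ordered set partitions, identifying the $S_n$-orbits on $X_\beta\times X_\gamma$ with intersection matrices in $\mathcal{M}(\beta,\gamma)$ (the double cosets $S_\beta\backslash S_n/S_\gamma$), and computing the stabilizer as the Young subgroup $\prod_{i,j}S_{a_{i,j}}$ is the standard proof, and the transitivity step you flag is indeed the only point needing an explicit construction, which goes through exactly as you describe. The paper itself offers no proof of this proposition --- it is quoted as a known formula underlying Vallejo's work --- so there is no alternative argument to compare against; yours is the expected one.
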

\noindent Aguiar et al. ~\cite{M-2015} provided a similar formula for the Heisenberg product. To express the formula, we introduce some notation. Given two finite sequences of real numbers $\alpha$, $\beta$, and $\gamma$. Let $\mathcal{F}(\alpha, \beta)$ be the set of matrices with real entries, zero at the top left corner, row-sum vector (ignoring the first row) $\alpha$ and column-sum vector (ignoring the first column) $\beta$. We denote by $\mathcal{H}(\alpha, \beta)$ the set of matrices in $\mathcal{F}(\alpha, \beta)$ with integer entries, and $\mathcal{H}(\alpha, \beta)_{\gamma}$ the subset of $\mathcal{H}(\alpha, \beta)$ whose elements have $\pi$-sequence $\gamma$.

\begin{exmp}\label{Hmatrixex}
	The following matrix is in $\mathcal{H}((18, 10), (12, 18, 3))_{(7,6,5,5,4,4,3,2,2,1)}$
	$$\left(\begin{array}{cccc}
		0 & 4 & 6 & 1\\
		4 & 5 & 7 & 2\\
		2 & 3 & 5 & 0
	\end{array}\right)_.$$
\end{exmp}

\begin{prop}[\cite{M-2015} Theorem 3.1]\label{Heisenofh}
	Let $\beta$ and $\gamma$ be two (weak) compositions, then the Heisenberg product of $\mathfrak{h}_\beta$ and $\mathfrak{h}_\gamma$ is
	$$\mathfrak{h}_\beta\#\mathfrak{h}_\gamma=\bigoplus\limits_{A\in \mathcal{H}(\beta, \gamma)} \mathfrak{h}_{\pi(A)}.$$
\end{prop}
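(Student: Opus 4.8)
The plan is to unwind the Heisenberg product through its degree decomposition in Definition \ref{DefHeisen} and to evaluate each constituent operation---restriction, Kronecker product, and induction---directly on the permutation modules $\mathfrak h_\beta$ and $\mathfrak h_\gamma$, each of which has a transparent combinatorial meaning. Write $n=|\beta|$, $m=|\gamma|$, say $\beta$ has $s$ parts and $\gamma$ has $t$ parts, and fix $l$ with $p=l-m$, $q=n+m-l$, $r=l-n$. The degree-$l$ component is $(\mathfrak h_\beta\#\mathfrak h_\gamma)_l=\text{Ind}_{S_p\times S_q\times S_r}^{S_l}\text{Res}^{S_n\times S_m}_{S_p\times S_q\times S_r}(\mathfrak h_\beta\otimes\mathfrak h_\gamma)$, so I would compute the restriction and the induction separately and then reassemble the sum over $l$.

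First I would establish the branching rule $\text{Res}^{S_{p+q}}_{S_p\times S_q}\mathfrak h_\beta=\bigoplus_{a+b=\beta}\mathfrak h_a\boxtimes\mathfrak h_b$, the sum running over (weak) compositions with $a_i+b_i=\beta_i$, $|a|=p$, $|b|=q$. This is immediate from $\mathfrak h_\beta=\text{Ind}_{S_\beta}^{S_n}\mathbf 1$: viewing $\mathfrak h_\beta$ as the permutation module on ordered set partitions of $[n]$ of type $\beta$, restricting the action to $S_p\times S_q$ sorts the orbits by how each block splits across $\{1,\dots,p\}$ and $\{p+1,\dots,p+q\}$. Dually, $\text{Ind}_{S_i\times S_j}^{S_{i+j}}(\mathfrak h_\mu\boxtimes\mathfrak h_\nu)=\mathfrak h_{\mu\sqcup\nu}$ by transitivity of induction, where $\sqcup$ denotes concatenation of compositions. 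Applying the branching rule to $\mathfrak h_\beta$ over $S_p\times S_q$ and to $\mathfrak h_\gamma$ over $S_q\times S_r$, and then restricting the two middle $S_q$-factors along the diagonal, the middle tensor becomes the Kronecker product $\mathfrak h_b*\mathfrak h_c$, which Proposition \ref{Kronofh} evaluates as $\bigoplus_{M\in\mathcal M(b,c)}\mathfrak h_{\pi(M)}$. Inducing back up and using the concatenation rule yields
$$(\mathfrak h_\beta\#\mathfrak h_\gamma)_l=\bigoplus_{\substack{a+b=\beta,\;c+d=\gamma\\ |b|=|c|=q}}\ \bigoplus_{M\in\mathcal M(b,c)}\mathfrak h_{a\sqcup\pi(M)\sqcup d}.$$

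Summing over all $l$, equivalently over all $q\ge 0$, then collects every tuple $(a,b,c,d,M)$ with $a+b=\beta$, $c+d=\gamma$, and $M\in\mathcal M(b,c)$. The final step is to exhibit the bijection with $\mathcal H(\beta,\gamma)$: to such a tuple associate the $(s+1)\times(t+1)$ matrix $A$ with zero top-left corner, first column below the corner equal to $a$, first row right of the corner equal to $d$, and core equal to $M$. The row-sum condition $a_i+\sum_j M_{ij}=\beta_i$ and the column-sum condition $d_j+\sum_i M_{ij}=\gamma_j$ are exactly $a+b=\beta$ and $c+d=\gamma$ with $b,c$ the row and column sums of $M$, so $A$ lies in $\mathcal H(\beta,\gamma)$ and the assignment is a bijection. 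Since the multiset of nonzero entries of $A$ consists precisely of $a$, $d$, and the entries of $M$, we get $\mathfrak h_{a\sqcup\pi(M)\sqcup d}=\mathfrak h_{\pi(A)}$, giving $\mathfrak h_\beta\#\mathfrak h_\gamma=\bigoplus_{A\in\mathcal H(\beta,\gamma)}\mathfrak h_{\pi(A)}$.

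The branching and concatenation rules are routine; the main point to get right is the bookkeeping in the final bijection---checking that the ranges of $q$ and of the decompositions cover $\mathcal H(\beta,\gamma)$ exactly once, with the zero-corner and the ``ignore the first row/column'' conventions matching the free parts $a,d$ contributed by the induction product and the overlap $q=|M|$ contributed by the Kronecker factor. The extreme cases $q=0$, recovering the induction product $\mathfrak h_{\beta\sqcup\gamma}$, and, when $n=m$, the case $q=n$, recovering Proposition \ref{Kronofh}, serve as consistency checks that the counting neither over- nor under-counts.
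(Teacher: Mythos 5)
The paper does not prove this proposition at all --- it is imported verbatim as Theorem 3.1 of the cited work of Aguiar, Ferrer Santos, and Moreira --- so there is no internal argument to compare yours against. Your derivation is correct and self-contained: the branching rule $\mathrm{Res}^{S_{p+q}}_{S_p\times S_q}\mathfrak h_\beta=\bigoplus_{a+b=\beta}\mathfrak h_a\boxtimes\mathfrak h_b$ and the concatenation rule for induction are both standard consequences of Mackey theory and transitivity of induction applied to $\mathfrak h_\beta=\mathrm{Ind}_{S_\beta}^{S_n}\mathbf 1$, the middle $S_q$-factor is correctly identified as a Kronecker product and evaluated by Proposition \ref{Kronofh}, and the final bijection $(a,d,M)\leftrightarrow A$ matches the paper's conventions exactly (the first column below the corner feeds the row sums $\beta$, the first row right of the corner feeds the column sums $\gamma$, and $\pi(A)$ discards the zeros, so $\mathfrak h_{a\sqcup\pi(M)\sqcup d}\cong\mathfrak h_{\pi(A)}$). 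Two minor points worth recording: the indexing set must implicitly be the matrices in $\mathcal H(\beta,\gamma)$ with \emph{nonnegative} entries (the paper's definition says only ``integer entries,'' but your bijection shows nonnegativity is exactly what the decomposition produces, and finiteness of the sum requires it); and $b,c$ are redundant data, being the row and column sums of $M$, so the tuples really are in bijection with $\mathcal H(\beta,\gamma)$ as claimed. Your consistency checks at $q=0$ and $q=n=m$ are apt.
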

\noindent We introduce H-additive matrices and use Proposition \ref{Heisenofh} to show that each H-additive matrix gives an H-stable triple.
\begin{defn}\label{HAdd}
	A $(p+1)\times (q+1)$ matrix $A=(a_{i,j})$ with nonnegative integer entries and $a_{1,1}=0$ is called\textit{ H-additive} if there exist real numbers $x_1=0, x_2, \ldots, x_{p+1}, y_1=0, y_2, \ldots, y_{q+1}$, such that
	$$a_{i,j}>a_{k,l}\Longrightarrow x_i+y_j>x_k+y_l$$
	for all  $(i,j), (k,l)\in [p+1]\times [q+1]\smallsetminus \{(1,1)\}$.
\end{defn}
With this definition, the matrix in Example \ref{Hmatrixex} is H-additive (consider setting $x_0=y_0=0, x_1=1, x_2=-1, y_1=1, y_2=3, y_3=-2$).
\begin{thm}\label{AddisH}
		Let $\alpha, \beta$, and $\gamma$ be partitions with $\max\{|\beta|, |\gamma|\}\leq |\alpha|\leq |\beta|+|\gamma|$. If there is a matrix $A\in \mathcal{H}(\beta, \gamma)_\alpha$ which is H-additive, then $(\alpha, \beta, \gamma)$ is H-stable.
\end{thm}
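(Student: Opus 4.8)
The plan is to mirror Vallejo's proof of Proposition \ref{AddisK}, replacing the Kronecker product formula (Proposition \ref{Kronofh}) by its Heisenberg analogue (Proposition \ref{Heisenofh}), and then to invoke the characterization of H-stable triples established above (an H-triple is H-stable if and only if $h_{n\beta,n\gamma}^{n\alpha}=1$ for every $n>0$). The first observation is that H-additivity is preserved under scaling: if $A$ is H-additive with potentials $x_1=0,\dots,x_{p+1}$ and $y_1=0,\dots,y_{q+1}$, then for every $n\ge 1$ the matrix $nA$ is again H-additive with the same potentials, $\pi(nA)=n\alpha$, its corner entry stays $0$, and the margins scale to $n\beta,n\gamma$; hence $nA\in\mathcal H(n\beta,n\gamma)_{n\alpha}$. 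Consequently it suffices to prove the single-scale statement: whenever $\mathcal H(\beta,\gamma)_\alpha$ contains an H-additive matrix, $h_{\beta,\gamma}^\alpha=1$. Applying this statement to each $nA$ gives $h_{n\beta,n\gamma}^{n\alpha}=1$ for all $n$ (in particular $h_{\beta,\gamma}^\alpha=1>0$, so $(\alpha,\beta,\gamma)$ is genuinely an H-triple), and the characterization then yields H-stability.

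The combinatorial heart is a maximality lemma generalizing the additive-matrix fact behind Proposition \ref{AddisK}: if $A\in\mathcal H(\beta,\gamma)$ is H-additive with $\pi(A)=\alpha$, then $\pi(B)\preccurlyeq\alpha$ in dominance order for every $B\in\mathcal H(\beta,\gamma)$, with equality only for $B=A$; in particular $\mathcal H(\beta,\gamma)_\alpha=\{A\}$ and $\alpha$ is the unique dominance-maximal $\pi$-sequence. I would prove this by a rearrangement (Hardy--Littlewood--P\'olya) argument: the potential $w(i,j)=x_i+y_j$ is linear, so $\sum_{i,j}w(i,j)b_{i,j}=\sum_i x_i\beta_i+\sum_j y_j\gamma_j$ is constant on $\mathcal H(\beta,\gamma)$, while H-additivity forces the entries of $A$ to be sorted compatibly with $w$; a standard exchange/majorization estimate then shows the partial sums of the sorted entries of $A$ dominate those of any other $B$ with the same margins. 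The only new feature relative to the Kronecker case is the pinned corner $a_{1,1}=0$ and the asymmetric margins (first row and first column excluded): these are accommodated by setting $x_1=y_1=0$ and running the exchange argument only over the cells in $[p+1]\times[q+1]\smallsetminus\{(1,1)\}$, exactly the cells on which Definition \ref{HAdd} imposes its condition, the corner $0$ being invisible to $\pi$.

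Finally I would deduce $h_{\beta,\gamma}^\alpha=1$ from the lemma. By Proposition \ref{Heisenofh}, $\mathfrak h_\beta\#\mathfrak h_\gamma=\bigoplus_{\delta}|\mathcal H(\beta,\gamma)_\delta|\,\mathfrak h_\delta$; combining this with the unitriangular expansion \eqref{htos}, $\mathfrak h_\mu=V_\mu\oplus\bigoplus_{\lambda\succ\mu}K_{\lambda,\mu}V_\lambda$, and its inverse, one expresses $h_{\beta,\gamma}^\alpha=\langle V_\beta\#V_\gamma,V_\alpha\rangle$ as a Kostka-weighted signed sum over $\mathcal H(\beta,\gamma)$, precisely as in Vallejo's passage from Proposition \ref{Kronofh} to Proposition \ref{AddisK}. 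Because $\alpha$ is the unique dominance-maximal $\pi$-sequence and $|\mathcal H(\beta,\gamma)_\alpha|=1$, all contributions from matrices $B$ with $\pi(B)\prec\alpha$ cancel through the triangular change of basis, leaving the single term coming from $A$, which contributes $1$. I expect the main obstacle to be this last cancellation together with the maximality lemma itself: one must verify that the interpolating first row and first column of $A$ play the role of the Littlewood--Richardson factors in the Heisenberg formula \eqref{h=cg} (governed by Proposition \ref{LR-StabResult}) while the inner $p\times q$ block plays the role of the Kronecker factor (governed by Proposition \ref{AddisK}), so that the matrix additivity translates into the vanishing of all but one sextuple in \eqref{h=cg}, each surviving factor being $1$.
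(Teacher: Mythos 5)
Your overall skeleton matches the paper's: scale the matrix, reduce to showing $h_{\beta,\gamma}^{\alpha}=1$ for a single H-additive $A\in\mathcal H(\beta,\gamma)_\alpha$, and invoke Proposition \ref{Hsuffcient}. But your key lemma has the dominance order backwards, and this is not a cosmetic slip. H-additivity is equivalent to \emph{real-minimality} (Theorem \ref{min=add}): there is no $B\in\mathcal H(\beta,\gamma)$ with $\pi(B)\prec\pi(A)$, and indeed $P(\alpha)\cap\Phi(\mathcal H(\beta,\gamma))=\{\Phi(A)\}$ (Corollary \ref{intersectionunique}), so $|\mathcal H(\beta,\gamma)_\delta|=0$ for all $\delta\prec\alpha$ and $|\mathcal H(\beta,\gamma)_\alpha|=1$. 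You instead assert that $\pi(B)\preccurlyeq\alpha$ for \emph{every} $B$, i.e.\ that $\alpha$ is the dominance-\emph{maximum}; that is false in general (already for $\mathcal M((2,2),(2,2))$ the additive matrix is the all-ones matrix with $\pi$-sequence $(1^4)$, dominated by the $\pi$-sequence $(2,2)$ of the diagonal matrix), and your own rearrangement sketch, carried out correctly, yields minimality: $\langle \pi(w),\pi(A)\rangle=\langle w,A\rangle=\langle w,B\rangle\le\langle\pi(w),\pi(B)\rangle$ forces $\pi(A)$ downward, not upward. The direction matters because the only inequality available without inverting anything is $h_{\beta,\gamma}^{\alpha}\le\langle \mathfrak h_\beta\#\mathfrak h_\gamma, V_\alpha\rangle=\sum_{\delta\preccurlyeq\alpha}|\mathcal H(\beta,\gamma)_\delta|K_{\alpha,\delta}$, and this collapses to a single term precisely because of minimality; under your maximality claim it would not collapse, and your proposed ``cancellation through the triangular change of basis'' of a signed Kostka-inverted sum is asserted rather than proved.

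The second gap is the lower bound $h_{\beta,\gamma}^{\alpha}\ge 1$, which the nonnegative upper-bound computation cannot supply and which you defer to an ``expected obstacle.'' The paper handles it constructively: sorting rows and columns by the potentials makes the first row $\gamma^{(1)}$, first column $\beta^{(1)}$, and inner block $A^{(2)}$ of $A$ produce an explicit sextuple for Formula \eqref{h=cg} with $c_{\beta^{(1)},\beta^{(2)}}^{\beta}=c_{\gamma^{(1)},\gamma^{(2)}}^{\gamma}=1$ (Littlewood--Richardson rule), $c_{\gamma^{(1)},\alpha^{(1)}}^{\alpha}=c_{\beta^{(1)},\alpha^{(2)}}^{\alpha^{(1)}}=1$ (the $\cup$ observation of Remark \ref{Hgeneralize}), and $g_{\beta^{(2)},\gamma^{(2)}}^{\alpha^{(2)}}=1$ because the inner block is K-additive (Lemma \ref{add=1}). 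Your closing sentence correctly anticipates this row/column--versus--inner-block division of labor, but as written the proposal neither establishes it nor repairs the reversed dominance claim on which the rest of your argument rests.
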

\begin{rem}\label{Hgeneralize}
	Theorem \ref{AddisH} is equivalent to Proposition \ref{AddisK} if $|\alpha|=|\beta|=|\gamma|$. The only LR-stable triples it can produce are in the form $(\beta\cup \gamma, \beta, \gamma)$, where $\beta\cup \gamma$ is the partition whose parts are those of $\beta$ and $\gamma$, arranged in decreasing order. It is not hard to see $c_{\beta, \gamma}^{\beta \cup \gamma}=1$.
\end{rem}
The proof for Theorem \ref{AddisH} is similar to Onn and Vallejo's proof \cite{Minimalmatrices, VallejoStab} for Proposition \ref{AddisK} with some changes, as we are looking at slightly different matrices. Consequently, we only give a sketch.

We first recall some basic notions, many introduced in \cite{Minimalmatrices,VallejoStab}, which will be used in our proof. We move away from integers for a while and work with real numbers. For a vector $\textbf{a}=(a_1, a_2, \ldots, a_m)\in \mathbb{R}^m$, we denote by $\pi(\textbf{a})$ the vector formed by the entries of $\textbf{a}$ arranged in weakly decreasing order. We say that $\textbf{a}$ is dominated by $\textbf{b}$ (both are vectors in $\mathbb{R}^m$), written as $\textbf{a}\preccurlyeq \textbf{b}$, if
$$\sum\limits_{i=1}^m a_i=\sum\limits_{i=1}^m b_i \quad \text{and}\quad \sum\limits_{i=1}^k \pi(\textbf{a})_i\leq\sum\limits_{i=1}^k \pi(\textbf{b})_i, \hskip 2mm \text{for all}\;\; k\in[m].$$
If $\textbf{a}\preccurlyeq \textbf{b}$ and $\pi(\textbf{a})\neq \pi(\textbf{b})$, then we write $\textbf{a}\prec \textbf{b}$. In particular, when $\textbf{a}$ and $\textbf{b}$ are partitions of some integer $n$, then $\preccurlyeq$ coincides with the dominance order for partitions.

For a permutation $\rho\in S_m$, we set $\textbf{a}_\rho:=(a_{\rho(1)}, a_{\rho(2)}, \ldots, a_{\rho(m)})$. The \textit{permutohedron} determined by $\textbf{a}$ is the convex hull of the vectors of the form $a_{\rho}$:
$$P(\textbf{a})=\text{conv}\{\textbf{a}_\rho\,|\,\rho\in S_m\}.$$
\begin{prop}[Rado \cite{Permutohedron}]\label{Permleq}\qquad$P(\textbf{a})=\{\textbf{x}\in \mathbb{R}^m\,|\,\textbf{x}\preccurlyeq \textbf{a}\}.$
\end{prop}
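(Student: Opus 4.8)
The plan is to prove the two inclusions $P(\textbf{a}) \subseteq D(\textbf{a})$ and $D(\textbf{a}) \subseteq P(\textbf{a})$ separately, where I write $D(\textbf{a}) := \{\textbf{x} \in \mathbb{R}^m : \textbf{x} \preccurlyeq \textbf{a}\}$. The structural fact underpinning both directions is that, for each $k \in [m]$, the sum of the $k$ largest coordinates of a vector is a maximum of finitely many linear functionals,
$$\sum_{i=1}^k \pi(\textbf{x})_i = \max_{\substack{S \subseteq [m] \\ |S| = k}} \sum_{i \in S} x_i,$$
and is therefore a convex function of $\textbf{x}$. Consequently each set $\{\textbf{x} : \sum_{i=1}^k \pi(\textbf{x})_i \le \sum_{i=1}^k \pi(\textbf{a})_i\}$ is convex, and $D(\textbf{a})$, being the intersection of these sets over $k \in [m]$ with the affine hyperplane $\{\textbf{x} : \sum_i x_i = \sum_i a_i\}$, is a compact convex set. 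Both $P(\textbf{a})$ and $D(\textbf{a})$ are invariant under permuting coordinates, so I may assume throughout that $\textbf{a} = \pi(\textbf{a})$ is sorted in weakly decreasing order.

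For the inclusion $P(\textbf{a}) \subseteq D(\textbf{a})$, note that each vertex $\textbf{a}_\rho$ lies in $D(\textbf{a})$: since $\pi(\textbf{a}_\rho) = \pi(\textbf{a})$, both defining conditions of $\preccurlyeq$ hold with equality. Because $D(\textbf{a})$ is convex, it contains every convex combination of the $\textbf{a}_\rho$, and hence all of $P(\textbf{a})$.

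The reverse inclusion is the substance of the theorem and the main obstacle. Given $\textbf{x} \preccurlyeq \textbf{a}$, I must exhibit $\textbf{x}$ as a convex combination of permutations of $\textbf{a}$, and I would argue by induction using ``Robin Hood'' transfers (T-transforms). Assuming $\textbf{x} = \pi(\textbf{x})$ and $\textbf{a} = \pi(\textbf{a})$ are sorted and $\textbf{x} \neq \textbf{a}$, let $j$ be the least index with $x_j \neq a_j$; the partial-sum inequalities force $x_j < a_j$, and equality of total sums then forces some later index $l$ with $x_l > a_l$. Transferring a suitable amount from the $j$-th to the $l$-th coordinate of $\textbf{a}$ produces a vector $\textbf{a}' = (1-\theta)\textbf{a} + \theta\,\textbf{a}_\sigma$, where $\sigma$ is the transposition exchanging $j$ and $l$ and $\theta \in [0,1]$ is chosen so that $\textbf{a}'$ agrees with $\textbf{x}$ in strictly more leading coordinates while still satisfying $\textbf{x} \preccurlyeq \textbf{a}'$. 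The key point is that $\textbf{a}' \in P(\textbf{a})$ as a convex combination of two vertices, and since $P(\textbf{a})$ is permutation-invariant, $P(\textbf{a}') = \text{conv}\{\textbf{a}'_\rho\} \subseteq P(\textbf{a})$. Applying the inductive hypothesis to the pair $(\textbf{x}, \textbf{a}')$ then yields $\textbf{x} \in P(\textbf{a}') \subseteq P(\textbf{a})$, and after finitely many transfers the sorted vector is brought to $\textbf{x}$.

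I expect the delicate point to be verifying that the transfer can always be arranged to both increase the number of agreeing leading coordinates and preserve the domination $\textbf{x} \preccurlyeq \textbf{a}'$; this requires choosing $\theta$ and the target index $l$ carefully and checking the intermediate partial sums (and re-sorting $\textbf{a}'$, which is harmless since $P(\pi(\textbf{a}')) = P(\textbf{a}')$). An alternative, cleaner route to the reverse inclusion is to invoke the Hardy--Littlewood--P\'olya theorem to produce a doubly stochastic matrix $D$ with $\textbf{x} = D\textbf{a}$, and then apply the Birkhoff--von Neumann theorem to write $D = \sum_\rho c_\rho\, P_\rho$ as a convex combination of permutation matrices, so that $\textbf{x} = \sum_\rho c_\rho\, \textbf{a}_\rho \in P(\textbf{a})$; this replaces the hand-crafted transfer argument by two standard theorems at the cost of shifting the work into their proofs.
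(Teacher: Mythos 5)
The paper offers no proof of this proposition: it is quoted as Rado's theorem with a citation to \cite{Permutohedron}, so there is no internal argument to compare against, and your proposal must be judged on its own. On that score it is essentially correct. The easy inclusion (partial sums of the $k$ largest coordinates are maxima of linear functionals, hence convex, so the dominance region contains the convex hull of the vertices) is exactly right. For the reverse inclusion your main route is the classical T-transform induction, and your instinct about where the delicacy lies is accurate, but one claim as stated is off: with $l$ the first index where $x_l>a_l$ and $\theta$ chosen maximally, the transfer may terminate at $a'_l=x_l$ rather than $a'_j=x_j$, so the number of \emph{leading} agreeing coordinates need not increase. The correct potential (as in Hardy--Littlewood--P\'olya, or Lemma B.1 of Marshall--Olkin) is the total number of coordinates where the sorted vectors differ, which does strictly drop; and dominance $\mathbf{x}\preccurlyeq\mathbf{a}'$ survives precisely because minimality of $l$ gives $x_i\le a_i$ for $j<i<l$, so the partial sums between $j$ and $l$ retain slack at least $\theta(a_j-a_l)\le a_j-x_j$, while re-sorting is harmless since sorting only increases partial sums. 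Since you flagged this point yourself and supplied the HLP-plus-Birkhoff--von Neumann alternative, which is airtight ($\mathbf{x}=D\mathbf{a}$ with $D$ doubly stochastic, $D=\sum_\rho c_\rho P_\rho$, hence $\mathbf{x}=\sum_\rho c_\rho\,\mathbf{a}_\rho$), the proposal stands. For comparison, Rado's original argument is shorter than either route: no hyperplane separates $\mathbf{x}$ from $P(\mathbf{a})$, because for any functional $\mathbf{c}$ the rearrangement inequality gives $\max_\rho\langle\mathbf{c},\mathbf{a}_\rho\rangle=\sum_i\pi(\mathbf{c})_i\,\pi(\mathbf{a})_i$, and Abel summation converts the hypotheses $\sum_{i\le k}\pi(\mathbf{x})_i\le\sum_{i\le k}\pi(\mathbf{a})_i$ (with equality at $k=m$) directly into $\langle\mathbf{c},\mathbf{x}\rangle\le\sum_i\pi(\mathbf{c})_i\,\pi(\mathbf{a})_i$; this trades your constructive decomposition of $\mathbf{x}$ for a two-line duality argument.
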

Suppose $\alpha$ and $\beta$ are two finite sequences of real numbers whose lengths are $p$ and $q$ respectively. We consider the linear map $\Phi: \mathcal{F}(\alpha, \beta)\longrightarrow \mathbb{R}^{pq+p+q}$,
$$\Phi(A)=(a_{1,2}, a_{1,3}, \ldots, a_{1,q+1}, a_{2,1}, a_{2,2}, \ldots, a_{2,q+1}, \ldots, a_{p+1,1}, a_{p+1,2}, a_{p+1,q+1})$$
where $A=(a_{i,j})\in \mathcal{F}(\alpha, \beta)$. A matrix $A\in \mathcal{F}(\alpha, \beta)$ is real-minimal if there is no other matrix $B\in \mathcal{F}(\alpha, \beta)$ such that $\pi(B)\prec \pi(A)$. Real-minimality has the following equivalent interpretations. 
\begin{prop}\label{mineqv}
	Let $A\in \mathcal{F}(\alpha, \beta)$. The following are equivalent:
	\begin{itemize}
		\item[(1)] $A$ is real-minimal.
		
		\item[(2)] $P(\Phi(A))\cap \Phi(\mathcal{F}(\alpha, \beta))=\{\Phi(A)\}$.
		
		\item[(3)] there exists a hyperplane $H\subset \mathbb{R}^{pq+p+q}$ containing $\Phi(\mathcal{F}(\alpha, \beta))$ such that 
		$$P(\Phi(A))\cap H=\{\Phi(A)\}.$$
	
	\end{itemize}
\end{prop}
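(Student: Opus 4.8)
The plan is to prove the chain $(1)\Leftrightarrow(2)\Leftrightarrow(3)$ after recording two structural facts. Since $\Phi$ is an injective linear map, $\mathcal{A}:=\Phi(\mathcal{F}(\alpha,\beta))$ is an affine subspace of $\mathbb{R}^{pq+p+q}$; write $\textbf{v}:=\Phi(A)\in\mathcal{A}$ and let $L:=\mathcal{A}-\textbf{v}$ be its direction space. Moreover $\textbf{v}$ is a rearrangement of $\pi(\textbf{v})$, hence a vertex of the permutohedron $P(\textbf{v})=P(\Phi(A))$, so its tangent cone $T_{\textbf{v}}:=\mathrm{cone}(P(\Phi(A))-\textbf{v})$ is a pointed polyhedral cone (a vertex lies on no segment of $P(\Phi(A))$ through it, so $T_{\textbf{v}}$ contains no line).

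For $(1)\Leftrightarrow(2)$ I would translate dominance into the permutohedron via Rado's description (Proposition \ref{Permleq}): for $B\in\mathcal{F}(\alpha,\beta)$ one has $\Phi(B)\preccurlyeq\Phi(A)$ iff $\Phi(B)\in P(\Phi(A))$. The implication $(2)\Rightarrow(1)$ is then immediate, since any $B$ witnessing non-minimality gives a point $\Phi(B)\in P(\Phi(A))\cap\mathcal{A}$ with $\pi(\Phi(B))\neq\pi(\Phi(A))$, hence $\Phi(B)\neq\Phi(A)$. For $(1)\Rightarrow(2)$, take $\textbf{x}=\Phi(B)\in P(\Phi(A))\cap\mathcal{A}$; minimality forbids $\pi(\Phi(B))\prec\pi(\Phi(A))$, so $\pi(\Phi(B))=\pi(\Phi(A))$. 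The one nonroutine point is ruling out a distinct $B$ with the same $\pi$-sequence: the midpoint $C=\tfrac12(A+B)$ again lies in $\mathcal{F}(\alpha,\beta)$, and since $\Phi(A)\neq\Phi(B)$ are two distinct points of $P(\Phi(A))$ their average is not extreme, hence not a vertex, so $\pi(\Phi(C))\prec\pi(\Phi(A))$, contradicting minimality. Thus $\Phi(B)=\Phi(A)$ and $(2)$ holds.

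The implication $(3)\Rightarrow(2)$ is trivial: $\mathcal{A}\subseteq H$ forces $P(\Phi(A))\cap\mathcal{A}\subseteq P(\Phi(A))\cap H=\{\Phi(A)\}$. The substantive direction is $(2)\Rightarrow(3)$, which I would phrase as a separation problem. Condition $(2)$ is equivalent to $L\cap T_{\textbf{v}}=\{0\}$: a nonzero $\textbf{d}\in L\cap T_{\textbf{v}}$ would give $\textbf{v}+\varepsilon\textbf{d}\in P(\Phi(A))\cap\mathcal{A}$ for small $\varepsilon>0$, contradicting $(2)$, while any second point of the intersection produces such a $\textbf{d}$. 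I then seek a functional $c$ with $c\perp L$ and $c\cdot\textbf{d}<0$ for every $\textbf{d}\in T_{\textbf{v}}\setminus\{0\}$; the hyperplane $H=\{\textbf{x}:c\cdot\textbf{x}=c\cdot\textbf{v}\}$ then contains $\mathcal{A}$, and since $\textbf{x}-\textbf{v}\in T_{\textbf{v}}\setminus\{0\}$ for every $\textbf{x}\in P(\Phi(A))\setminus\{\textbf{v}\}$, it meets $P(\Phi(A))$ only at $\textbf{v}$.

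Existence of such a $c$ is the crux. Writing $T_{\textbf{v}}=\mathrm{cone}(\textbf{g}_1,\dots,\textbf{g}_s)$ with $\textbf{g}_k=\textbf{w}_k-\textbf{v}$ over the vertices $\textbf{w}_k\neq\textbf{v}$ of $P(\Phi(A))$, I want $c$ solving $c\cdot\textbf{g}_k<0$ for all $k$ together with $c\cdot\textbf{e}=0$ on a basis of $L$. By the Motzkin transposition theorem this system is solvable unless some nonnegative combination $\sum_k\lambda_k\textbf{g}_k$ with $\lambda\neq0$ lies in $L$; but such a combination would lie in $L\cap T_{\textbf{v}}=\{0\}$, and pointedness of $T_{\textbf{v}}$ (which forces any set of nonzero generators to be positively independent) then gives $\lambda=0$, a contradiction. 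Hence $c$ exists and $(3)$ follows. I expect this separation step---arranging the exposing functional to be orthogonal to $L$---to be the main obstacle, the remainder being bookkeeping with Rado's theorem and the vertex/tangent-cone dictionary.
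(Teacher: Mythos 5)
Your argument is correct and takes essentially the same route as the proof the paper relies on: the paper gives no proof of its own here but defers to Onn and Vallejo \cite{Minimalmatrices}, whose argument is exactly this combination of Rado's theorem for $(1)\Leftrightarrow(2)$ (including the midpoint trick to rule out a second matrix with the same $\pi$-sequence) together with a separation/theorem-of-the-alternative argument for $(2)\Leftrightarrow(3)$. The only bookkeeping you elide is the one place where the paper's remark that ``the matrices are different'' actually matters: $\pi(A)$ is formed from all entries of $A$ including the forced zero $a_{1,1}$, while $\Phi(A)$ omits it, so one should record that appending a common zero entry to two sequences preserves dominance and its strictness (and, trivially, that in the degenerate case where $P(\Phi(A))$ is a single point one must choose a nonzero $c$, which exists since $\Phi(\mathcal{F}(\alpha,\beta))$ has codimension $p+q$).
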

\noindent See \cite[Section 5]{Minimalmatrices} for the proof of this Proposition. Although the matrices we are working with are different, the proof still applies.

\begin{prop}\label{minortho}
	Let $A\in \mathcal{F}(\alpha, \beta)$, $\textbf{a}=\Phi(A)$. Then $A$ is real-minimal if and only if there is some vector $\textbf{n}\in \mathbb{R}^{pq+p+q}$ such that
	\begin{itemize}
		\item[(1)] $\textbf{n}$ is orthogonal to $\Phi(\mathcal{F}(\alpha, \beta))$.
		\item[(2)] For each transposition $\sigma=(s\; s{+}1)\in S_{pq+p+q}$ such that $a_s\neq a_{s+1}$, one has $\langle\, \textbf{n}\,, \sigma \textbf{a} -\textbf{a}\, \rangle > 0$.
	\end{itemize}
\end{prop}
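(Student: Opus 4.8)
The plan is to translate the statement into the convex geometry of the permutohedron and then lean on the equivalences already recorded in Proposition \ref{mineqv}. I would write $\Phi(\mathcal{F}(\alpha,\beta))=\textbf{a}+V$, where $V\subseteq\mathbb{R}^{pq+p+q}$ is the direction space of this affine image (the $\Phi$-images of the margin-free matrices, keeping the top-left entry fixed at $0$), so that ``$\textbf{n}$ orthogonal to $\Phi(\mathcal{F}(\alpha,\beta))$'' means precisely $\textbf{n}\perp V$. By Rado's theorem (Proposition \ref{Permleq}), $P(\textbf{a})=\{\textbf{x}:\textbf{x}\preccurlyeq\textbf{a}\}$, and by Proposition \ref{mineqv} real-minimality of $A$ is equivalent to $P(\textbf{a})\cap\Phi(\mathcal{F}(\alpha,\beta))=\{\textbf{a}\}$, hence to the existence of a hyperplane $H\supseteq\Phi(\mathcal{F}(\alpha,\beta))$ meeting $P(\textbf{a})$ only at $\textbf{a}$. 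The entire argument is then a dictionary between such a separating hyperplane and the normal vector $\textbf{n}$ of the statement.

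For the forward implication I would take the hyperplane $H$ supplied by Proposition \ref{mineqv}(3) and let $\textbf{n}$ be its normal, oriented so that $P(\textbf{a})$ lies in the closed half-space $\{\textbf{x}:\langle\textbf{n},\textbf{x}\rangle\ge\langle\textbf{n},\textbf{a}\rangle\}$. Since $H$ contains the affine set $\Phi(\mathcal{F}(\alpha,\beta))$, its normal satisfies $\textbf{n}\perp V$, which is condition (1). For any adjacent transposition $\sigma=(s\;s{+}1)$ with $a_s\neq a_{s+1}$, the point $\sigma\textbf{a}$ is a genuine rearrangement of the coordinates, hence a vertex of $P(\textbf{a})$ distinct from $\textbf{a}$; thus $\sigma\textbf{a}\notin H$ and $\langle\textbf{n},\sigma\textbf{a}\rangle>\langle\textbf{n},\textbf{a}\rangle$, giving condition (2). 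I expect this direction to be routine.

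The reverse implication is the substantive one. Assuming $\textbf{n}$ satisfies (1) and (2), I would aim to show $P(\textbf{a})\cap\Phi(\mathcal{F}(\alpha,\beta))=\{\textbf{a}\}$, whence minimality follows by Proposition \ref{mineqv}. Given $\textbf{b}=\Phi(B)$ in this intersection, condition (1) and $\textbf{b}-\textbf{a}\in V$ force $\langle\textbf{n},\textbf{b}\rangle=\langle\textbf{n},\textbf{a}\rangle$, while $\textbf{b}\preccurlyeq\textbf{a}$; so it suffices to prove that $\textbf{a}$ is the \emph{unique} minimizer of $\langle\textbf{n},\cdot\rangle$ on $P(\textbf{a})$, forcing $\textbf{b}=\textbf{a}$. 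The crux is to upgrade the adjacent-transposition inequalities of (2) to this global statement via a sorting argument: any vertex $\textbf{a}_\rho\neq\textbf{a}$ is reached from $\textbf{a}$ by a sequence of adjacent transpositions, each swapping an out-of-order pair, and one must show $\langle\textbf{n},\cdot\rangle$ increases strictly along a suitably chosen such sequence, after which convex combinations handle all of $P(\textbf{a})\setminus\{\textbf{a}\}$. This is exactly where orthogonality does real work: constraint (1) pins $\textbf{n}$ to the ``potential'' form dictated by $V^\perp$ (entries of shape $x_i+y_j$ after the corner normalization), which is what couples the otherwise independent adjacent inequalities across distinct rows and columns. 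I expect this chaining step --- reconciling the adjacent-position transpositions of (2) with the value-adjacent swaps that actually generate the edges (tangent cone) of $P(\textbf{a})$ --- to be the main obstacle, and I would carry it out exactly as in Onn and Vallejo's treatment of $\mathcal{M}(\alpha,\beta)$ \cite{Minimalmatrices}, the only changes being bookkeeping for the extra first row and column and the fixed zero corner. I note that for the intended application in Theorem \ref{AddisH} the $H$-additivity hypothesis supplies $x_i,y_j$ comonotone with \emph{all} pairs of entries simultaneously, so the chaining collapses to the rearrangement inequality.
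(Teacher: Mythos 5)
Your proposal is correct and follows essentially the same route as the paper: the paper's entire proof of this proposition is a deferral to Onn and Vallejo's Proposition 6.1 in \cite{Minimalmatrices}, and your argument is that proof transplanted to $\mathcal{F}(\alpha,\beta)$, with the separating-hyperplane dictionary made explicit and the one genuinely delicate step (upgrading the adjacent-transposition inequalities of (2), via the potential form of $\textbf{n}$ forced by (1), to strict positivity of $\langle\textbf{n},\cdot-\textbf{a}\rangle$ on all of $P(\textbf{a})\smallsetminus\{\textbf{a}\}$) correctly identified and, exactly as the paper does, delegated to the cited source. Nothing further is needed.
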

\begin{rem}
The second condition is equivalent to $\langle\, \textbf{n}\,, \textbf{x} -\textbf{a}\, \rangle > 0$ for all $\textbf{x}\in P(\textbf{a}), \textbf{x}\neq \textbf{a}$. 
\end{rem}
\noindent Again, one can use the proof in \cite[Proposition 6.1]{Minimalmatrices} to prove this. The definition of H-additivity can be extended naturally to matrices with real entries, and we next show that real-minimality is equivalent to H-additivity for real matrices.
\begin{thm}\label{min=add}
	Let $A\in  \mathcal{F}(\alpha, \beta)$. Then $A$ is real-minimal if and only if $A$ is H-additive.
\end{thm}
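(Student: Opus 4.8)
The plan is to prove both directions simultaneously by feeding each condition through the orthogonality criterion of Proposition \ref{minortho}. The whole argument reduces to identifying the orthogonal complement of the direction space of $\mathcal{F}(\alpha,\beta)$ and recognizing that the vectors in this complement are exactly those of the additive form $n_{i,j}=x_i+y_j$ appearing in the definition of H-additivity.

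First I would describe the direction space of the affine set $\Phi(\mathcal{F}(\alpha,\beta))\subset\mathbb{R}^{pq+p+q}$. A matrix $D=(d_{i,j})$ is a difference of two elements of $\mathcal{F}(\alpha,\beta)$ precisely when $d_{1,1}=0$, every row sum over rows $2,\dots,p+1$ vanishes, and every column sum over columns $2,\dots,q+1$ vanishes. Dualizing these linear constraints, a vector $\textbf{n}$ (viewed as a matrix $N=(n_{i,j})$ with the $(1,1)$-entry omitted, as in $\Phi$) is orthogonal to this direction space if and only if $N$ lies in the span of the row indicators $R_i$ $(i\geq 2)$ and the column indicators $C_j$ $(j\geq 2)$; that is, $n_{i,j}=x_i+y_j$ for some reals $x_2,\dots,x_{p+1},y_2,\dots,y_{q+1}$. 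Since no sum constraint is imposed on the first row or the first column, the functionals $R_1$ and $C_1$ are absent, and this is exactly what forces $x_1=y_1=0$ in Definition \ref{HAdd}. This is the single place where the modified shape of our matrices (fixed $a_{1,1}=0$, sums ignoring the first row and column) enters, and I expect it to be the main point requiring care, since it is where our setting genuinely differs from the matrices in $\mathcal{M}(\alpha,\beta)$ treated by Onn and Vallejo.

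Next I would translate condition (2) of Proposition \ref{minortho}. For an adjacent transposition $\sigma=(s\ s{+}1)$ the vector $\sigma\textbf{a}-\textbf{a}$ is supported on coordinates $s,s{+}1$, and a direct computation gives $\langle\,\textbf{n},\,\sigma\textbf{a}-\textbf{a}\,\rangle=(n_s-n_{s+1})(a_{s+1}-a_s)$; requiring this to be positive whenever $a_s\neq a_{s+1}$ says the larger entry of $\textbf{a}$ is always matched with the smaller entry of $\textbf{n}$. Using the remark following Proposition \ref{minortho}, equivalently by minimizing the linear functional $\textbf{n}$ over the permutohedron $P(\textbf{a})$ via Proposition \ref{Permleq} and the rearrangement inequality, this globalizes to the statement that $a_s>a_t\Rightarrow n_s<n_t$ for every pair of coordinate positions $s,t$; the strict reversal is precisely what makes $\textbf{a}$ the \emph{unique} minimizer, since it also rules out $n_s=n_t$ whenever $a_s\neq a_t$.

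Finally I would assemble the chain of equivalences. By Proposition \ref{minortho}, $A$ is real-minimal if and only if there exists $\textbf{n}$ orthogonal to the direction space of $\Phi(\mathcal{F}(\alpha,\beta))$ satisfying condition (2); by the two steps above this is the existence of reals $x_1=y_1=0,x_2,\dots,x_{p+1},y_2,\dots,y_{q+1}$ with $a_{i,j}>a_{k,l}\Rightarrow x_i+y_j<x_k+y_l$. Replacing $(x,y)$ by $(-x,-y)$, which preserves $x_1=y_1=0$, turns this into $a_{i,j}>a_{k,l}\Rightarrow x_i+y_j>x_k+y_l$, exactly the condition of Definition \ref{HAdd}. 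Hence $A$ is real-minimal if and only if $A$ is H-additive.
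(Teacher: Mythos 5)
Your proof is correct and follows essentially the same route as the paper: identifying the orthogonal complement of the direction space of $\Phi(\mathcal{F}(\alpha,\beta))$ with the vectors of the form $n_{i,j}=x_i+y_j$, $x_1=y_1=0$ (this is exactly the row space of the matrix $M$ the paper constructs), and then translating condition (2) of Proposition \ref{minortho} via the permutohedron remark into the sign-reversed additivity inequality. The only difference is that you write out the translation step that the paper delegates to Onn and Vallejo's Theorem 6.2, correctly noting that the adjacent-transposition condition must be globalized through the remark rather than chained directly.
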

\noindent Following Onn and Vallejo \cite{Minimalmatrices}, we first construct a matrix. Let $M=(m_{i,j})$ be a $(p+q)\times (pq+p+q)$ matrix with
$$m_{i,j}=\left\{{\begin{array}{l}
1,\quad \text{if}\;\; 1\leq i\leq p \;\;\text{and}\;\; i(q+1)\leq j \leq q+i(q+1);\\
1,\quad \text{if}\;\; p+1\leq i\leq p+q\;\; \text{and}\;\; j=s-p+k(q+1),\; \text{for some}\;\; 0\leq k\leq p;\\
0,\quad \text{otherwise.}
\end{array}}\right.$$
For example, if $p=2$ and $q=3$, then
$$M=\left(\begin{array}{ccccccccccc}
0&0&0&1&1&1&1&0&0&0&0\\
0&0&0&0&0&0&0&1&1&1&1\\
1&0&0&0&1&0&0&0&1&0&0\\
0&1&0&0&0&1&0&0&0&1&0\\
0&0&1&0&0&0&1&0&0&0&1
\end{array}
\right).$$

Let $\textbf{r}_1, \textbf{r}_2, \ldots, \textbf{r}_{p+q}$ be the rows of $M$. From the definition of $M$, it is obvious that these $p+q$ vectors are linearly independent. The set $\Phi(\mathcal{F}(\alpha, \beta))$ is exactly the set of (transpose of) solutions of the following matrix equation:
$$M\textbf{x}=\textbf{y}$$
where $\textbf{y}=(\alpha_1,\ldots, \alpha_p, \beta_1, \ldots, \beta_q)'.$ For a vector $\textbf{z}=(x_2, \ldots x_{p+1}, y_2, \ldots, y_{q+1})$, we have
$$\textbf{z}M=\Phi((x_i+y_j)_{(i,j)\in [p+1]\times [q+1]}),$$
where we set $x_1=y_1=0$.
\begin{proof}[Proof of Theorem \ref{min=add}]
	Suppose $A\in  \mathcal{F}(\alpha, \beta)$ is real-minimal, then there exists a vector $\textbf{n}\in \mathbb{R}^{pq+p+q}$ satisfying the two conditions in Proposition \ref{minortho}. Since $\textbf{n}$ is orthogonal to $\Phi(\mathcal{F}(\alpha, \beta))$, $\textbf{n}$ must be in the row space of $M$. So there are unique numbers $x_2, \ldots, x_{p+1}, y_2, \ldots, y_{q+1}$ such that
	$$-\textbf{n}=x_2 \textbf{r}_1+\cdots+x_{p+1} \textbf{r}_p+y_2\textbf{r}_{p+1}+\cdots+y_{q+1}\textbf{r}_{p+q}.$$
	Let $\textbf{z}=(x_2, \ldots x_{p+1}, y_1, \ldots, y_{q+1})$, then $-\textbf{n}=\textbf{z}M=-\Phi((x_i+y_j))$. Following the arguments in \cite[Theorem 6.2]{Minimalmatrices} proves this theorem.
\end{proof}
\noindent From Proposition \ref{Permleq}, \ref{mineqv}, and Theorem \ref{min=add}, we have
\begin{cor}\label{intersectionunique}
	Let $A\in  \mathcal{F}(\alpha, \beta)$. Then $A$ is H-additive if and only if
	$$P(\pi(A))\cap \Phi(\mathcal{F}(\alpha, \beta))=\{\Phi(A)\}.$$
\end{cor}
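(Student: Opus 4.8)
The plan is to read this corollary as a direct consequence of Proposition~\ref{mineqv} and Theorem~\ref{min=add}, the only new ingredient being the identification of the two permutohedra $P(\pi(A))$ and $P(\Phi(A))$.

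First I would observe that the permutohedron depends only on the multiset of coordinates of its generating vector. Indeed, for any $\textbf{a}\in\mathbb{R}^m$ the orbit $\{\textbf{a}_\rho\,|\,\rho\in S_m\}$ is unchanged when $\textbf{a}$ is replaced by any rearrangement of its entries, since such a rearrangement is itself some $\textbf{a}_{\rho_0}$. Reading $\pi(A)$ as $\pi(\Phi(A))$, the decreasing rearrangement of the $pq+p+q$ coordinates of $\Phi(A)$ (the entries of $A$ other than the forced corner entry $a_{1,1}=0$), this gives $P(\pi(A))=P(\Phi(A))$. It is worth stressing that both $\pi(A)$ and $\Phi(A)$ are then vectors in $\mathbb{R}^{pq+p+q}$, so that $P(\pi(A))$ and $\Phi(\mathcal{F}(\alpha,\beta))$ are subsets of the same Euclidean space and their intersection is meaningful.

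With this identity in hand the argument is a short chain of equivalences. By the equivalence of~(1) and~(2) in Proposition~\ref{mineqv}, the matrix $A$ is real-minimal if and only if $P(\Phi(A))\cap\Phi(\mathcal{F}(\alpha,\beta))=\{\Phi(A)\}$; by Theorem~\ref{min=add}, $A$ is real-minimal if and only if $A$ is H-additive. Replacing $P(\Phi(A))$ by the equal set $P(\pi(A))$ then yields exactly the claimed equivalence.

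Since all the substantive work is already contained in the cited results, I do not expect a genuine obstacle. The one place requiring care is purely bookkeeping: fixing the convention that $\pi(A)$ omits the forced zero $a_{1,1}$ so that it lands in $\mathbb{R}^{pq+p+q}$, matching the ambient space of $\Phi(\mathcal{F}(\alpha,\beta))$.
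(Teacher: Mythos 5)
Your proposal is correct and matches the paper's (unwritten) argument: the paper derives this corollary directly from Proposition \ref{Permleq}, Proposition \ref{mineqv}, and Theorem \ref{min=add}, exactly the chain of equivalences you describe, with the identification $P(\pi(A))=P(\Phi(A))$ as the only glue. Your bookkeeping remark about $\pi(A)$ omitting the forced corner zero so that it lies in $\mathbb{R}^{pq+p+q}$ is a sensible clarification of a convention the paper leaves implicit.
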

\section{Proof of Theorem \ref{AddisH}}

Vallejo showed that the Kronecker coefficient indexed by the K-triple produced by a K-additive matrix is 1.
\begin{lem}[\cite{PlanePartChar} Corollary 4.2]\label{add=1}
	Let $A\in \mathcal{M}(\beta, \gamma)_{\alpha}$ be K-additive where $\alpha$, $\beta$, and $\gamma$ are partitions with the same size, then $g_{\beta, \gamma}^{\alpha}=1$.
\end{lem}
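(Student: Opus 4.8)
The plan is to compute the multiplicity of $V_\alpha$ in the Kronecker product $\text{Res}_{S_n}^{S_n\times S_n}(\mathfrak{h}_\beta\otimes\mathfrak{h}_\gamma)$ in two different ways and compare, exactly paralleling how one would later treat the H-additive case. On one hand, Proposition \ref{Kronofh} expands this product as $\bigoplus_{A'\in\mathcal M(\beta,\gamma)}\mathfrak{h}_{\pi(A')}$, and the unitriangular expansion \eqref{htos} writes each summand as $\mathfrak{h}_{\pi(A')}=V_{\pi(A')}\oplus\bigoplus_{\lambda\succ\pi(A')}K_{\lambda,\pi(A')}V_\lambda$. On the other hand, applying \eqref{htos} directly to $\mathfrak{h}_\beta$ and $\mathfrak{h}_\gamma$ and using the bilinearity of the Kronecker product rewrites the same multiplicity in terms of genuine Kronecker coefficients $g^\alpha_{\sigma,\tau}$.

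First I would exploit the hypothesis that $A$ is K-additive. By the K-analogues of Theorem \ref{min=add} and Corollary \ref{intersectionunique} (the statements for $\mathcal M(\beta,\gamma)$ in place of $\mathcal F(\alpha,\beta)$, established in \cite{Minimalmatrices, VallejoStab}), K-additivity of $A$ is equivalent to real-minimality; hence $\alpha=\pi(A)$ is minimal in the dominance order among all $\pi$-sequences occurring in $\mathcal M(\beta,\gamma)$, and moreover $A$ is the unique matrix in $\mathcal M(\beta,\gamma)_\alpha$, so $|\mathcal M(\beta,\gamma)_\alpha|=1$ (any $B\in\mathcal M(\beta,\gamma)_\alpha$ has $\Phi(B)\in P(\alpha)\cap\Phi(\mathcal M(\beta,\gamma))=\{\Phi(A)\}$ by Proposition \ref{Permleq}). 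Extracting the coefficient of $V_\alpha$ from the first expansion, only those $A'$ with $\pi(A')\preccurlyeq\alpha$ can contribute, and by minimality the sole possibility is $\pi(A')=\alpha$, each contributing $K_{\alpha,\alpha}=1$. Thus the multiplicity of $V_\alpha$ equals $|\mathcal M(\beta,\gamma)_\alpha|=1$.

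Comparing this with the second expansion yields
$$1=\langle \text{Res}_{S_n}^{S_n\times S_n}(\mathfrak{h}_\beta\otimes\mathfrak{h}_\gamma),\,V_\alpha\rangle=\sum_{\sigma\succcurlyeq\beta,\ \tau\succcurlyeq\gamma}K_{\sigma,\beta}\,K_{\tau,\gamma}\,g^\alpha_{\sigma,\tau},$$
where I have used $K_{\sigma,\beta}>0\iff\sigma\succcurlyeq\beta$ together with $K_{\beta,\beta}=K_{\gamma,\gamma}=1$. Every summand is a nonnegative integer and the $(\sigma,\tau)=(\beta,\gamma)$ term is precisely $g^\alpha_{\beta,\gamma}$, so at once $g^\alpha_{\beta,\gamma}\le 1$.

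The main obstacle is the reverse inequality $g^\alpha_{\beta,\gamma}\ge 1$: a priori the total $1$ could be carried by some other term $g^\alpha_{\sigma,\tau}$ with $\sigma\succ\beta$ or $\tau\succ\gamma$, since Kronecker coefficients do \emph{not} respect dominance (sign-twisting sends a dominant pair to a dominance-small output). To exclude this I would argue that the dominance-minimal constituent $V_\alpha$ of the product can only be produced by the leading Schur terms $s_\beta\ast s_\gamma$; equivalently, that the existence of the unique minimal matrix $A$ already forces $g^\alpha_{\beta,\gamma}>0$. This is exactly where the theory of minimal matrices must enter, by producing an explicit lower bound for $g^\alpha_{\beta,\gamma}$ in terms of the minimal matrices with $\pi$-sequence $\alpha$ (cf.\ \cite{PlanePartChar}). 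Combining such a lower bound with the upper bound $g^\alpha_{\beta,\gamma}\le 1$ obtained above then yields $g^\alpha_{\beta,\gamma}=1$.
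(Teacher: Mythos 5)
There is a genuine gap, and you have in fact located it yourself: your argument establishes only the upper bound $g^{\alpha}_{\beta,\gamma}\le 1$ and leaves the positivity $g^{\alpha}_{\beta,\gamma}\ge 1$ as an unproved appeal to ``the theory of minimal matrices.'' The upper-bound half is sound: K-additivity gives real-minimality, hence (via Proposition \ref{Permleq} and the $\mathcal M(\beta,\gamma)$-analogue of Corollary \ref{intersectionunique}) both the dominance-minimality of $\alpha=\pi(A)$ among $\pi$-sequences and $|\mathcal M(\beta,\gamma)_{\alpha}|=1$; comparing the two expansions of $\langle \mathfrak h_{\beta}\ast\mathfrak h_{\gamma}, V_{\alpha}\rangle$ then gives $1=\sum_{\sigma\succcurlyeq\beta,\tau\succcurlyeq\gamma}K_{\sigma,\beta}K_{\tau,\gamma}\,g^{\alpha}_{\sigma,\tau}\ge g^{\alpha}_{\beta,\gamma}$. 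But this is the soft half of the lemma. The hard half is exactly the step you defer: ruling out that the single unit of multiplicity is carried by some $g^{\alpha}_{\sigma,\tau}$ with $(\sigma,\tau)\succ(\beta,\gamma)$. As you note, dominance considerations cannot do this (Kronecker coefficients are not monotone in dominance of the inputs), and no statement quoted in this paper supplies the needed lower bound. Vallejo's actual proof of positivity is a substantive construction (roughly: an additive matrix decomposes into nested $0$--$1$ level sets whose $\pi$-sequences add up to $\alpha$, and the semigroup property of nonvanishing Kronecker coefficients is applied to these layers); nothing of that sort appears in your write-up, so the conclusion $g^{\alpha}_{\beta,\gamma}=1$ is not established.

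For context: the paper does not prove this lemma at all --- it imports it verbatim from Vallejo \cite{PlanePartChar}, which is a legitimate move that your blind attempt could also have made explicit. The closest in-paper model is the proof of Lemma \ref{Hadd=1}, where the same two-sided strategy is used for Heisenberg coefficients: the upper bound $h^{\alpha}_{\beta,\gamma}\le 1$ is obtained exactly as you do (via Proposition \ref{Heisenofh}, equation \eqref{htos}, and Corollary \ref{intersectionunique}), but the lower bound $h^{\alpha}_{\beta,\gamma}\ge 1$ is handled by an explicit construction --- peeling off the first row and column of the H-additive matrix, invoking the Littlewood--Richardson rule for the border pieces, and invoking the present lemma for the remaining block. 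If you want a self-contained proof of the Kronecker case, you need an analogous explicit positivity argument; as written, your proof reduces the lemma to the very result being cited.
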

\noindent The same is true for Heisenberg coefficients and H-additive matrices.
\begin{lem}\label{Hadd=1}
	Let $A\in \mathcal{H}(\beta, \gamma)_{\alpha}$ be H-additive, where $\alpha$, $\beta$, and $\gamma$ are partitions with $\max\{|\beta|, |\gamma| \}\leq |\lambda|\leq |\beta|+|\gamma|$, then $h_{\beta, \gamma}^{\alpha}=1$.
\end{lem}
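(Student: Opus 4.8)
The plan is to compute $\langle \mathfrak{h}_\beta \# \mathfrak{h}_\gamma, V_\alpha\rangle$ in two different ways and compare, exactly paralleling Vallejo's proof of Lemma \ref{add=1}. On the one hand, Proposition \ref{Heisenofh} gives $\mathfrak{h}_\beta \# \mathfrak{h}_\gamma = \bigoplus_{B\in\mathcal{H}(\beta,\gamma)}\mathfrak{h}_{\pi(B)}$, so by \eqref{htos} we get $\langle \mathfrak{h}_\beta \# \mathfrak{h}_\gamma, V_\alpha\rangle = \sum_{B\in\mathcal{H}(\beta,\gamma)} K_{\alpha,\pi(B)}$. Only matrices $B$ with $|\pi(B)|=|\alpha|$ contribute (Kostka numbers vanish between different sizes), and among these $K_{\alpha,\pi(B)}>0$ if and only if $\alpha\succcurlyeq\pi(B)$. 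By Theorem \ref{min=add} the H-additivity of $A$ means $A$ is real-minimal, so no $B$ satisfies $\pi(B)\prec\pi(A)=\alpha$; hence $\alpha\succcurlyeq\pi(B)$ forces $\pi(B)=\alpha$. Finally, by Corollary \ref{intersectionunique} together with Rado's description (Proposition \ref{Permleq}), $\Phi(A)$ is the only point of $\Phi(\mathcal{F}(\beta,\gamma))$ in $P(\pi(A))$, so $A$ is the unique matrix in $\mathcal{H}(\beta,\gamma)$ with $\pi$-sequence $\alpha$. Therefore the matrix side collapses to $K_{\alpha,\alpha}=1$.

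On the other hand, expanding \emph{both} factors through \eqref{htos} and using that the Heisenberg product is bilinear with respect to direct sums, together with $K_{\beta,\beta}=K_{\gamma,\gamma}=1$, yields
$$\langle \mathfrak{h}_\beta \# \mathfrak{h}_\gamma, V_\alpha\rangle = \sum_{\mu\succcurlyeq\beta,\ \nu\succcurlyeq\gamma} K_{\mu,\beta}\,K_{\nu,\gamma}\,h_{\mu,\nu}^{\alpha}.$$
Every summand is a nonnegative integer, and the term indexed by $(\mu,\nu)=(\beta,\gamma)$ is exactly $h_{\beta,\gamma}^{\alpha}$ with coefficient $1$. Comparing with the first computation gives $h_{\beta,\gamma}^{\alpha}\le 1$ immediately.

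It remains to prove the matching lower bound $h_{\beta,\gamma}^{\alpha}\ge 1$; granting this, the fact that the whole sum equals $1$ with all summands nonnegative and the $(\beta,\gamma)$-coefficient equal to $1$ forces $h_{\beta,\gamma}^{\alpha}=1$ (and incidentally kills every other term). To establish positivity I would argue that $V_\alpha$ is a dominance-minimal constituent of $\mathfrak{h}_\beta\#\mathfrak{h}_\gamma$: any $V_\lambda$ occurring forces $\lambda\succcurlyeq\pi(B)$ for some $B$, and if $\lambda\prec\alpha$ then some $\pi(B)\preccurlyeq\lambda\prec\alpha$ would contradict the real-minimality of $A$. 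Since $V_\beta\#V_\gamma$ is the bottom layer of $\mathfrak{h}_\beta\#\mathfrak{h}_\gamma$ (the summand indexed by $(\beta,\gamma)$), the goal is to show this minimal constituent $V_\alpha$ already appears there, which is precisely $h_{\beta,\gamma}^{\alpha}>0$.

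The main obstacle is the monotonicity statement behind this last step: for $\mu\succcurlyeq\beta$ and $\nu\succcurlyeq\gamma$, every dominance-minimal constituent of $V_\mu\#V_\nu$ should dominate some constituent of $V_\beta\#V_\gamma$, so that none of the higher layers can contribute a \emph{new} minimal constituent and the global minimal ones all come from the bottom layer. This is the analog for the Heisenberg product of the minimal-matrix monotonicity used by Onn and Vallejo \cite{Minimalmatrices,VallejoStab}, and it is the technical heart of the argument; I expect to obtain it by combining Proposition \ref{formulaheisenberg} with the corresponding monotonicity for Littlewood--Richardson and Kronecker coefficients, tracking how dominating the margins $\beta,\gamma$ of the matrices in $\mathcal{H}(\beta,\gamma)$ pushes their $\pi$-sequences upward in dominance order.
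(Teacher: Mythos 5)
Your upper bound is correct and is essentially the paper's own argument: both compute $\langle \mathfrak{h}_\beta\#\mathfrak{h}_\gamma, V_\alpha\rangle = \sum_{\delta}|\mathcal{H}(\beta,\gamma)_\delta|\,K_{\alpha,\delta}$ via Proposition \ref{Heisenofh} and \eqref{htos}, and use Corollary \ref{intersectionunique} to see that only $\delta=\alpha$ contributes, with $|\mathcal{H}(\beta,\gamma)_\alpha|=1$; your bilinear expansion of both factors is a harmless repackaging of the paper's one-line inequality $h_{\beta,\gamma}^\alpha = \langle V_\beta\#V_\gamma, V_\alpha\rangle \le \langle \mathfrak{h}_\beta\#\mathfrak{h}_\gamma, V_\alpha\rangle$.

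The gap is the lower bound $h_{\beta,\gamma}^\alpha\ge 1$, which you defer to an unproved monotonicity principle --- that every dominance-minimal constituent of $\mathfrak{h}_\beta\#\mathfrak{h}_\gamma$ must already occur in the bottom layer $V_\beta\#V_\gamma$. You correctly identify this as the technical heart, but you do not prove it, and it is at least as hard as the statement you want: the identity $1=\sum_{\mu\succcurlyeq\beta,\,\nu\succcurlyeq\gamma} K_{\mu,\beta}K_{\nu,\gamma}\,h_{\mu,\nu}^{\alpha}$ is perfectly consistent with the single nonzero term sitting at some $(\mu,\nu)\neq(\beta,\gamma)$, and nothing you have written rules that out. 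The paper sidesteps this entirely with an explicit construction: after permuting rows and columns so that $x_2\ge\cdots\ge x_{p+1}$ and $y_1\ge\cdots\ge y_{q+1}$ (which by H-additivity forces the entries of $A$ off the first row and column to decrease along rows and columns), it peels off the first row and first column of $A$ as partitions $\gamma^{(1)}$ and $\beta^{(1)}$, observes that the remaining submatrix $A^{(2)}$ is a K-additive matrix in $\mathcal{M}(\beta-\beta^{(1)},\gamma-\gamma^{(1)})$, and then exhibits a nonzero summand of formula \eqref{h=cg} using $c^{\beta}_{\beta^{(1)},\beta^{(2)}}=c^{\gamma}_{\gamma^{(1)},\gamma^{(2)}}=1$, the union identities $c^{\alpha}_{\gamma^{(1)},\alpha^{(1)}}=c^{\alpha^{(1)}}_{\beta^{(1)},\alpha^{(2)}}=1$ from Remark \ref{Hgeneralize}, and Vallejo's Lemma \ref{add=1} giving $g^{\alpha^{(2)}}_{\beta^{(2)},\gamma^{(2)}}=1$. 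You need some such concrete positivity argument; the dominance-minimality heuristic alone does not supply one.
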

\begin{proof}
We first show that $h_{\beta, \gamma}^{\alpha}\geq 1$. Suppose $\beta=(\beta_1, \beta_2, \ldots, \beta_p)$, $\gamma=(\gamma_1, \gamma_2, \ldots, \gamma_q)$. Since $A=(a_{ij})$ is additive, then there exists real numbers $x_i$'s and $y_j$'s ($i\in [p+1]$ and $ j\in[q+1]$) satisfy the condition in Definition \ref{HAdd}. After permuting rows and columns if necessary, we may assume $x_2\geq x_3\geq \cdots \geq x_{p+1}$ and $y_2\geq y_3\geq \cdots \geq y_{q+1}$. By H-additivity, this assumption implies that  $a_{i,j}\geq a_{i,j+1}$ for all $1\leq i\leq p+1, 2\leq j\leq q$, and $a_{i,j}\geq a_{i+1,j}$ for all $2\leq i\leq p, 1\leq j\leq q+1$. Set $\beta^{(1)}=(a_{2,1}, a_{3,1}, \ldots, a_{p+1, 1})$ , $\gamma^{(1)}=(a_{1,2}, a_{1,3}, \ldots, a_{1, q+1})$, $\beta^{(2)}=\beta-\beta^{(1)}$, and $\gamma^{(2)}=\gamma-\gamma^{(1)}$, then these four are all partitions and, by the Littlewood--Richardson rule, we have
\begin{equation}
\label{TwoLR=1(1)}
c_{\beta^{(1)}, \beta^{(2)}}^\beta=c_{\gamma^{(1)}, \gamma^{(2)}}^{\gamma}=1.
\end{equation}
Let $A^{(1)}$ be the the submatrix of $A$ obtained by removing the first row, and $A^{(2)}$ be the submatrix of $A^{(1)}$ obtained by removing the first column. We set $\alpha^{(1)}=\pi(A^{(1)})$ and $\alpha^{(2)}=\pi(A^{(2)})$. From Remark \ref{Hgeneralize}, we have
\begin{equation}
\label{TwoLR=1(2)}
c_{\gamma^{(1)}, \alpha^{(1)}}^{\alpha}=c_{\beta^{(1)}, \alpha^{(2)}}^{\alpha^{(1)}}=1,
\end{equation}
as $\alpha=\gamma^{(1)}\cup \alpha^{(1)}$ and $\alpha^{(1)}=\beta^{(1)}\cup \alpha^{(2)}$. Note that $A^{(2)}\in \mathcal{M}({\beta^{(2)}, \gamma^{(2)}})_{\alpha^{(2)}}$ is additive, so, due to Lemma \ref{add=1}, we have 
\begin{equation}
\label{Kron=1}
g_{\beta^{(2)}, \gamma^{(2)}}^{\alpha^{(2)}}=1.
\end{equation}
Using Proposition \ref{formulaheisenberg} and Equations \eqref{TwoLR=1(1)}, \eqref{TwoLR=1(2)}, and \eqref{Kron=1}, we have $h_{\beta, \gamma}^\alpha\geq 1$.

On the other hand, using Equation \eqref{htos} and properties of Kostka numbers, we have
\begin{equation}\label{hupper}
\begin{split}
h_{\beta, \gamma}^\alpha&=\langle\, V_{\beta}\# V_{\gamma}\,,\, V_{\alpha}\,\rangle\leq \langle\, \mathfrak{h}_{\beta}\# \mathfrak{h}_{\gamma}\,,\, V_{\alpha}\,\rangle=\langle\,\bigoplus\limits_{A\in \mathcal{H}(\beta, \gamma)} \mathfrak{h}_{\pi(A)} \,,\, V_{\alpha}\,\rangle\\
&=\langle\,\bigoplus_{\delta} |\mathcal{H}(\beta, \gamma)_\delta|\, \mathfrak{h}_{\delta} \,,\, V_{\alpha}\,\rangle=\langle\,\bigoplus_{\delta}\bigoplus\limits_{\epsilon\succcurlyeq \delta} |\mathcal{H}(\beta, \gamma)_\delta|K_{\epsilon, \delta}\, V_\epsilon \,,\, V_{\alpha}\,\rangle\\
&=\sum\limits_{\delta\preceq \alpha} |\mathcal{H}(\beta, \gamma)_\delta|K_{\alpha, \delta}
\end{split}
\end{equation}
Since $A\in \mathcal{H}(\beta, \gamma)_\alpha$ is H-additive, according to Corollary \ref{intersectionunique}, we have
$$P(\alpha)\cap \Phi(\mathcal{H}(\beta, \gamma))=\{\Phi(A)\}.$$
Hence, it follows that $|\mathcal{H}(\beta, \gamma)_\delta|=0$ for all $\delta\prec \alpha$, and $|\mathcal{H}(\beta, \gamma)_\alpha|=1$. Equation \eqref{hupper} shows that $h_{\beta, \gamma}^\alpha\leq 1$, proving the lemma.
\end{proof}
\begin{proof}[Proof of Theorem \ref{AddisH}]
If a matrix $A$ is H-additive,then $nA$ is H-additive. Consequently, by Lemma \ref{Hadd=1}, $h_{n\beta, n\gamma}^{n\alpha}=1$ for all $n>0$. By Proposition \ref{Hsuffcient}, $(\alpha, \beta, \gamma)$ is H-stable.
\end{proof}
\begin{rem}
	One may prove Theorem \ref{AddisH} without using Proposition \ref{Hsuffcient}. See \cite[Section 5]{VallejoStab}, the proof is very similar. As in \cite[Theorem 7.1]{Minimalmatrices}, given a rational matrix $A$ with zero at the top left corner, it can be decided in polynomial time whether $A$ is H-additive.
\end{rem}

\bibliographystyle{amsplain}
\bibliography{GeneralizedStab}

\end{document}